\numberwithin{equation}{section}
\newtheorem{thm}{Theorem}[section]
\newtheorem{lem}[thm]{Lemma}
\newtheorem{prop}[thm]{Proposition}
{\theoremstyle{definition}
\newtheorem*{rem}{Remark}}
\begin{document}

\allowdisplaybreaks

\newcommand{\arXivNumber}{1812.09791}

\renewcommand{\thefootnote}{}

\renewcommand{\PaperNumber}{075}

\FirstPageHeading

\ShortArticleName{Twisted de Rham Complex on Line and Singular Vectors in $\widehat{{\mathfrak{sl}_2}}$ Verma Modules }

\ArticleName{Twisted de Rham Complex on Line \\ and Singular Vectors in $\boldsymbol{\widehat{{\mathfrak{sl}_2}}}$ Verma Modules\footnote{This paper is a~contribution to the Special Issue on Algebra, Topology, and Dynamics in Interaction in honor of Dmitry Fuchs. The full collection is available at \href{https://www.emis.de/journals/SIGMA/Fuchs.html}{https://www.emis.de/journals/SIGMA/Fuchs.html}}}

\Author{Alexey SLINKIN~$^\dag$ and Alexander VARCHENKO~$^{\dag\ddag}$}

\AuthorNameForHeading{A.~Slinkin and A.~Varchenko}

\Address{$^\dag$~Department of Mathematics, University of North Carolina at Chapel Hill,\\
\hphantom{$^\dag$}~Chapel Hill, NC 27599-3250, USA}
\EmailD{\href{mailto:slinalex@live.unc.edu}{slinalex@live.unc.edu}, \href{mailto:anv@email.unc.edu}{anv@email.unc.edu}}
\URLaddressD{\url{http://varchenko.web.unc.edu/}}

\Address{$^\ddag$~Faculty of Mathematics and Mechanics, Lomonosov Moscow State University,\\
\hphantom{$^\ddag$}~Leninskiye Gory 1, 119991 Moscow GSP-1, Russia}

\ArticleDates{Received May 30, 2019, in final form September 21, 2019; Published online September 26, 2019}

\Abstract{We consider two complexes. The first complex is the twisted de Rham complex of scalar meromorphic differential forms on projective line, holomorphic on the complement to a finite set of points. The second complex is the chain complex of the Lie algebra of $\mathfrak{sl}_2$-valued algebraic functions on the same complement, with coefficients in a tensor product of contragradient Verma modules over the affine Lie algebra $\widehat{{\mathfrak{sl}_2}}$. In [Schechtman~V., Varchenko~A., \textit{Mosc. Math.~J.} \textbf{17} (2017), 787--802] a construction of a monomorphism of the first complex to the second was suggested and it was indicated that under this monomorphism the existence of singular vectors in the Verma modules (the Malikov--Feigin--Fuchs singular vectors) is reflected in the relations between the cohomology classes of the de Rham complex. In this paper we prove these results.}

\Keywords{twisted de Rham complex; logarithmic differential forms; $\widehat{{\mathfrak{sl}_2}}$-modules; Lie algebra chain complexes}

\Classification{17B56; 17B67; 33C80}

\begin{flushright}
\begin{minipage}{65mm}
\it Dedicated to Dmitry Borisovich Fuchs\\ on the occasion of his 80-th birthday
\end{minipage}
\end{flushright}

\renewcommand{\thefootnote}{\arabic{footnote}}
\setcounter{footnote}{0}

\section{Introduction}

We consider two complexes. The first complex is the twisted de Rham complex of scalar meromorphic differential forms on projective line, that are holomorphic on the complement to a finite set of points. The second complex is the chain complex of the Lie algebra of $\mathfrak{sl}_2$-valued algebraic functions on the same complement, with coefficients in a tensor product of contragradient Verma modules over the affine Lie algebra $\widehat{{\mathfrak{sl}_2}}$. In \cite{SV2} a construction of a monomorphism of the first complex to the second was suggested. That construction gives a relation between the singular vectors in the Verma modules and resonance relations in the de Rham complex.

That construction of the homomorphism was invented in the middle of 90s, while the paper~\cite{SV2} was prepared for publication 20 years later, when the proofs were forgotten,
if they existed. The paper~\cite{SV2} provides supporting evidence to the results formulated in~\cite{SV2}, but not the proofs. The goal of this paper is to give the proofs to the results formulated in~\cite{SV2}, namely, the proofs that the construction in~\cite{SV2} indeed gives a homorphism of complexes and relates the resonances in the de Rham complex and the~$\widehat{{\mathfrak{sl}_2}}$ singular vectors.

The construction in \cite{SV2} has two motivations.

The first motivation was to generalize the principal construction of~\cite{SV1}. In~\cite{SV1}, the tensor products of contragradient Verma modules over a semisimple Lie algebra were identified with the spaces of the top degree logarithmic differential forms over certain configuration spaces. Also the logarithmic parts of the de Rham complexes over the configuration spaces were identified with some standard Lie algebra chain complexes having coefficients in these tensor products, cf.\ in~\cite{KS, KV} a~$\mathcal D$-module explanation of this correspondence.

The second idea was that the appearance of {singular vectors} in Verma modules over affine Lie algebras is reflected in the relations between the cohomology classes of logarithmic differential forms. This was proved in an important particular case in~\cite{FSV,FSV+}, and in~\cite{STV} a one-to-one correspondence was established ``on the level of parameters''. In \cite{SV2} and in the present paper this correspondence is developed for another non-trivial class of singular vectors, namely for (a~part of) {Malikov--Feigin--Fuchs} singular vectors, cf.~\cite{MFF}.

The paper has the following structure. In Section \ref{sec DR} we introduce the de Rham complex of a~master function and resonance relations. In Section~\ref{sec hsl} we discuss $\widehat{{\mathfrak{sl}_2}}$ Verma modules, the Kac--Kazhdan reducibility conditions. We formulate Theorem~\ref{thm form} which describes certain relations in a contragradient Verma module. The proof of Theorem~\ref{thm form} is the main new result of this paper. In Theorem~\ref{limit} we describe the connection between the relations, described in Theorem~\ref{thm form}, and the Malikov--Feigin--Fuchs singular vectors. In Section~\ref{sec homs} we construct a map of the de Rham complex of the master function to the chain complex of the Lie algebra of $\mathfrak{sl}_2$-valued algebraic functions. Theorem~\ref{thm hom} says that the map is a monomorphism of complexes. The proof of Theorem~\ref{thm hom} is the second new result of this paper. Section~\ref{sec PROOF} is devoted to the proof of Theorem~\ref{thm form}. The proof is straightforward but rather nontrivial and lengthy.

\section{The de Rham complex of master function} \label{sec DR}

\subsection{Twisted de Rham complex} \label{def dr}
Consider ${\mathbb C}$ with coordinate $t$. Define the {\it master function} by the formula
\begin{gather*}
\Phi(t) = \prod_{i=1}^n (t-z_i)^{-m_i/\kappa} ,
\end{gather*}
where $z_1, \dots, z_n, m_1,\dots,m_n, \kappa$ $\in{\mathbb C}$ are parameters. Fix these parameters and assume that $z_1,\dots,z_n$ are distinct. Set
\begin{gather*}
z_{n+1}=\infty, \qquad m_{n+1}=m_1+\dots+m_n-2.
\end{gather*}
Denote $U={\mathbb C}-\{z_1,\dots,z_n\}$.

Consider the {\em twisted de Rham complex} associated with $\Phi$,
\begin{gather}\label{dr com}
0\longrightarrow \Omega^0(U)\overset{\partial}{\longrightarrow}\Omega^1(U) \longrightarrow 0.
\end{gather}
Here $\Omega^p(U)$ is the space of rational differential $p$-forms on ${\mathbb C}$ regular on $U$. The differential $\partial$ is given by the formula
\begin{gather}\label{nabla}
\partial={\rm d}+\alpha\wedge\cdot ,
\end{gather}
where ${\rm d}$ is the standard de Rham differential and the second summand is the left exterior multiplication by the form
\begin{gather*}
\alpha=-\frac{1}{\kappa}\sum_{i=1}^n m_i \frac{{\rm d}t}{t-z_i} =\frac{{\rm d}\Phi}\Phi.
\end{gather*}
Formula (\ref{nabla}) is motivated by the computation
\begin{gather*}
{\rm d}(\Phi\omega)=\Phi {\rm d}\omega+{\rm d}\Phi\wedge\omega=\Phi({\rm d}\omega+\alpha\wedge\omega).
\end{gather*}
The complex $\Omega^{\bullet}(U)$ is the complex of global {\em algebraic} sections of
the de Rham complex of $\big({\mathcal{O}}^{\rm an}_U,\partial\big)$, where $\partial={\rm d}+\alpha\wedge\cdot$ is considered as the integrable connection on the sheaf ${\mathcal{O}}^{\rm an}_U$ of holomorphic functions on $U$.

If the monodromy of $\Phi$ is non-trivial, that is, if at least one of the numbers $m_1/\kappa, \dots, m_n/\kappa$ is not an integer, then
\begin{gather*}
H^0(\Omega^{\bullet}(U))=0,\qquad \dim H^1(\Omega^{\bullet}(U))=n-1,
\end{gather*}
see for example \cite{STV}.

\subsection[Basis of $\Omega^{\bullet}(U)$]{Basis of $\boldsymbol{\Omega^{\bullet}(U)}$} \label{Basis of Omega}

The functions
\begin{gather*}
\frac1{(t-z_i)^{a}}\ \ \text{for}\ a\in{\mathbb Z}_{>0} \qquad\text{and}\qquad t^a\ \ \text{for}\ a\in{\mathbb Z}_{\geq 0}
\end{gather*}
form a basis of $\Omega^0(U)$. The differential forms
\begin{gather*}
\frac{{\rm d}t}{(t-z_i)^a}\ \ \text{for}\ a\in{\mathbb Z}_{>0} \qquad \text{and}\qquad t^a{\rm d}t\ \ \text{for}\ a\in{\mathbb Z}_{\geq 0}
\end{gather*}
form a basis of $\Omega^1(U)$. The differential $\partial$ is given by the formulas
\begin{gather}
\kappa \partial\left(\frac1{(t-z_i)^{a}}\right) = -(m_i+a\kappa)\frac{{\rm d} t}{(t-z_i)^{a+1}}
+ \sum_{k=1}^a\sum_{j\neq i} \frac{m_j}{(z_j-z_i)^k} \frac{{\rm d}t}{(t-z_i)^{a+1-k}} \nonumber\\
\hphantom{\kappa \partial\left(\frac1{(t-z_i)^{a}}\right) =}{} -\sum_{j\neq i}\frac{ m_j}{(z_j-z_i)^a} \frac{{\rm d}t}{t-z_j}, \label{bff} \\
\kappa \partial\big(t^a\big) = \left(a\kappa -\sum_{j=1}^n m_j\right)t^{a-1}dt -\sum_{k=1}^{a-1} \sum_{j=1}^n m_jz_j^k t^{a-1-k}{\rm d}t-\sum_{j=1}^n m_jz_j^a \frac{{\rm d}t}{t-z_j}.\label{bfi}
\end{gather}

\subsection{Resonances}\label{reson}
The equations
\begin{enumerate}\itemsep=0pt
\item[(i)] $m_i+(a-1)\kappa=0$ for some $a\in {\mathbb Z}_{>0}$, $i\in\{1,\dots,n\}$,
\item[(ii)] $m_{n+1}+2-a\kappa=0$ for some $a\in {\mathbb Z}_{>0}$,
\item[(iii)] $\kappa=0$,
\end{enumerate}
are called the {\it resonance relations} for the parameters $m_1,\dots,m_{n+1}$, $\kappa$ of the de Rham complex.

If $\kappa=0$, then the twisted de Rham complex is not defined. If the resonance relation \mbox{$m_i+a\kappa=0$} is satisfied for some $a$, then the first term in the right-hand side of (\ref{bff}) equals zero. Similarly, if the resonance relation $m_{n+1}+2-a\kappa=0$ is satisfied for some $a$, then the first term in the right-hand side of (\ref{bfi}) equals zero.

\subsection{Logarithmic subcomplex} \label{log forms}
Let $\Omega_{\log}^0(U) \subset \Omega^0(U)$ be the subspace generated over ${\mathbb C}$ by function $1$. Let $\Omega_{\log}^1(U) \subset \Omega^1(U)$ be the subspace generated over ${\mathbb C}$ by the differential forms
\begin{gather*}
\omega_j=\frac{{\rm d}t}{t-z_j}, \qquad j=1,\dots,n.
\end{gather*}
These subspaces form the {\it logarithmic} subcomplex $(\Omega^\bullet_{\log}(U), \partial)$ of the de Rham complex\linebreak $(\Omega^\bullet(U), \partial)$. We have
\begin{gather*}
\partial\colon \ 1 \mapsto \alpha.
\end{gather*}

For generic $m_1,\dots, m_n$, $\kappa$, the embedding $(\Omega^\bullet_{\log}(U), \partial)\hookrightarrow (\Omega^\bullet(U), \partial)$ is a quasi-isomorphism, the logarithmic forms $\omega_1,\dots,\omega_n$ generate the space $H^1(\Omega^\bullet(U))$, and the cohomological relation $\sum\limits_{i=1}^n m_i \omega_i\sim 0$ is the only one, see for example~\cite{STV}.

Each resonance relation implies a new cohomological relation between the forms $\omega_1,\dots,\omega_n$, see \cite[Corollary 6.4]{SV2}. For example, if $m_{n+1}+2-\kappa=0$, then $\sum\limits_{j=1}^n z_j m_j\omega_j \sim 0$, and if $m_{n+1}+2-2\kappa=0$, then
\begin{gather*}
\sum_{j=1}^n z_j^2m_j\omega_j-\frac{1}{\kappa}\left(\sum_{j=1}^n\ z_jm_j\right)\left(\sum_{i=1}^n\ z_im_i\omega_i\right)\sim 0.
\end{gather*}

\section[$\widehat{{\mathfrak{sl}_2}}$-modules]{$\boldsymbol{\widehat{{\mathfrak{sl}_2}}}$-modules} \label{sec hsl}
\subsection[Lie algebra $\widehat{{\mathfrak{sl}_2}}$]{Lie algebra $\boldsymbol{\widehat{{\mathfrak{sl}_2}}}$} \label{lie alg hsl}

Let $\mathfrak{sl}_2$ be the Lie algebra of complex $(2\times 2)$-matrices with zero trace. Let $e$, $f$, $h$ be standard generators subject to the relations
\begin{gather*}
[e,f]=h,\qquad [h,e]=2e,\qquad [h,f]=-2f.
\end{gather*}
Let $\widehat{{\mathfrak{sl}_2}}$ be the affine Lie algebra $\widehat{{\mathfrak{sl}_2}}=\mathfrak{sl}_2\big[T,T^{-1}\big]\oplus{\mathbb C} c$ with the bracket
\begin{gather*}
\big[aT^i,bT^j\big] = [a,b]T^{i+j} + i \langle a,b\rangle \delta_{i+j,0} c ,
\end{gather*}
where $c$ is central element, $\langle a,b\rangle=\operatorname{tr} (ab)$. Set
\begin{alignat*}{4}
&e_1 = e, \qquad && f_1 = f, \qquad && h_1=h,& \\
& e_2 = fT, \qquad && f_2=eT^{-1}, \qquad && h_2=c-h.&
\end{alignat*}
These are the standard Chevalley generators defining $\widehat{{\mathfrak{sl}_2}}$ as the Kac--Moody algebra correspon\-ding to the Cartan matrix
$\left(\begin{smallmatrix} \hphantom{-}2&-2 \\
-2&\hphantom{-}2 \end{smallmatrix}\right).$

\subsection[Automorphism $\pi$]{Automorphism $\boldsymbol{\pi}$} \label{aut}
The Lie algebra $\widehat{{\mathfrak{sl}_2}}$ has an automorphism $\pi$,
\begin{gather*}
\pi \colon \ c \mapsto c, \qquad eT^i \mapsto fT^i, \qquad fT^i\mapsto eT^i, \qquad hT^i\mapsto -hT^i.
\end{gather*}

\subsection{Verma modules} \label{Ver mod}
We fix $k\in{\mathbb C}$ and assume that the central element~$c$ acts on all our representations by multiplication by~$k$.

For $m\in{\mathbb C}$, let $V(m,k-m)$ be the $\widehat{{\mathfrak{sl}_2}}$ {\em Verma module} with generating vector $v$. The Verma module is generated by $v$ subject to the relations
\begin{gather*}
e_1v=0,\qquad e_2v=0,\qquad h_1v=mv,\qquad h_2v=(k-m)v.
\end{gather*}

Let $\hat{\mathfrak{n}}_-\subset\widehat{{\mathfrak{sl}_2}}$ be the Lie subalgebra generated by $f_1$, $f_2$ and $U\hat{\mathfrak{n}}_-$ its enveloping algebra. The map $U\hat{\mathfrak{n}}_-\to V(m,k-m)$, $F\mapsto Fv$, is an isomorphism of $U\hat{\mathfrak{n}}_-$-modules.

The space $V(m,k-m)$ has a ${\mathbb Z}^2_{\geq 0}$-grading: a vector $f_{i_1}\cdots f_{i_p}v$\ with $i_j\in\{1,2\}$ has deg\-ree~$(p_1,p_2)$, if $p_i$ is the number of $i$'s in the sequence $i_1,\dots,i_p$. For $\gamma\in{\mathbb Z}^2_{\geq 0}$, denote by $V(m,k-m)_{\gamma}\subset V(m,k-m)$ the corresponding $\gamma$-homogeneous component.

A homogeneous nonzero vector $\omega$ in $V(m,k-m)$, non-proportional to $v$, is called a {\em singular vector} if $e_1\omega=e_2\omega=0$. The Verma module $V(m,k-m)$ is reducible, if and only if it contains a singular vector.

\subsection{Reducibility conditions}\label{reduc}
See Kac--Kazhdan \cite{KK}. Set
\begin{gather*}
\kappa=k+2.
\end{gather*}
The Verma module $V(m,k-m)$ is reducible if and only if at least one of the following relations holds:
\begin{enumerate}\itemsep=0pt
\item[(a)] $m-l+1+(a-1)\kappa=0$,
\item[(b)] $m+l+1-a\kappa=0$,
\item[(c)] $\kappa=0$,
\end{enumerate}
where $l,a\in{\mathbb Z}_{>0}$. If $(m,\kappa)$ satisfies exactly one of the conditions~(a),~(b), then $V(m,k-m)$ contains a unique proper submodule, and this submodule is generated by a~singular vector of degree $(la,l(a-1))$ for condition~(a) and of degree $(l(a-1),la)$ for condition~(b).

These singular vectors are highly nontrivial and are given by the following theorem.

\begin{thm}[{Malikov--Feigin--Fuchs, \cite{MFF}}]\label{thm MFF} For $a, l\in{\mathbb Z}_{>0}$ and $\kappa\in {\mathbb C}$, the monomials
\begin{gather*}
F_{12}(l,a,\kappa) = f_1^{l+(a-1)\kappa}f_2^{l+(a-2)\kappa} f_1^{l+(a-3)\kappa}\cdots f_2^{l-(a-2)\kappa} f_1^{l-(a-1)\kappa}, \\ 
F_{21}(l,a,\kappa) = f_2^{l+(a-1)\kappa}f_1^{l+(a-2)\kappa} f_2^{l+(a-3)\kappa}\cdots f_1^{l-(a-2)\kappa} f_2^{l-(a-1)\kappa} \nonumber
\end{gather*}
are well-defined as elements of $U\hat{\mathfrak{n}}_-$. If $m=l-1-(a-1)\kappa$, then $F_{12}(l,a,\kappa)v\in V(m,k-m)$ is a singular vector of degree $(la,l(a-1))$ and if $m=-l-1+a\kappa$, then $F_{21}(l,a,\kappa)v\in V(m,k-m)$ is a singular vector of degree $(l(a-1),la)$.
\end{thm}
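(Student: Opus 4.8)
The plan is to establish the Malikov--Feigin--Fuchs theorem by the ``complex powers'' technique: work in a suitable completion of $U\hat{\mathfrak n}_-$ in which expressions like $f_1^{s}$ for $s\in{\mathbb C}$ make sense, prove that the stated products are actually elements of $U\hat{\mathfrak n}_-$ (not merely of the completion), and then verify the singular vector condition $e_1\omega=e_2\omega=0$ by a direct computation that exploits the $\mathfrak{sl}_2$-triple commutation relations in each of the two ``directions'' corresponding to the simple roots. First I would recall the formal identities in $U(\mathfrak{sl}_2)$: for the triple $(e,f,h)$ one has, for any $s$, the commutation rule $e\, f^{s} = f^{s} e + s f^{s-1}(h-s+1)$, understood as an identity of operators after conjugating by a Cartan element, or equivalently as an identity between rational functions of $s$ that is polynomial on the relevant lattice. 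The key observation of Malikov--Feigin--Fuchs is that a product $f_{i_1}^{s_1} f_{i_2}^{s_2}\cdots$ in which consecutive exponents are adjusted by $\pm\kappa$ can be ``rotated'': applying $e_{i}$ to the leftmost factor and commuting it through produces a telescoping cancellation precisely because of how the exponents are chosen and because $e_i$ commutes with $f_j$ for $i\neq j$ in the affine $\mathfrak{sl}_2$ with this Cartan matrix.

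The main steps, in order, would be: (1) set up the completion $\widehat U\hat{\mathfrak n}_-$ (say, the completion with respect to the ${\mathbb Z}^2_{\geq 0}$-grading on one side and allowing formal complex powers of $f_1,f_2$) and check that $F_{12}(l,a,\kappa)$ and $F_{21}(l,a,\kappa)$ define elements there for all $\kappa$; (2) prove the polynomiality/integrality statement: when $m=l-1-(a-1)\kappa$ the total degree of $F_{12}(l,a,\kappa)v$ in the grading is the integer pair $(la,l(a-1))$, and more importantly the vector obtained is a polynomial (not just rational) expression in $\kappa$, so it lies in $U\hat{\mathfrak n}_-$ --- this is done by an induction on $a$, writing $F_{12}(l,a,\kappa)$ in terms of $F_{12}(l,a-1,\cdot)$ conjugated appropriately and showing the potential poles cancel; (3) verify $e_1 F_{12}(l,a,\kappa)v=0$ and $e_2 F_{12}(l,a,\kappa)v=0$ by commuting $e_1$ (resp.\ $e_2$) from the left through the product, using the $s$-dependent commutation identity at each factor it does not commute with, and observing that the resulting sum telescopes to zero once the highest-weight conditions $e_1 v=e_2 v=0$, $h_1 v = m v$, $h_2 v=(k-m)v$ are imposed with the stated value of $m$; (4) deduce the $F_{21}$ case from the $F_{12}$ case by applying the automorphism $\pi$ of Section~\ref{aut}, which swaps $e_1\leftrightarrow f_1$ only partially --- more precisely one uses the diagram automorphism interchanging the two nodes, under which $f_1\leftrightarrow f_2$, $e_1\leftrightarrow e_2$, and $V(m,k-m)$ with the role of the two exponents swapped, so that $F_{21}$ is literally the image of $F_{12}$.

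I expect the main obstacle to be step (2), the well-definedness and integrality: one must show that although individual factors $f_1^{l+(a-1)\kappa}$ only make sense formally, the \emph{product} re-expands as a genuine element of $U\hat{\mathfrak n}_-$ with polynomial dependence on $\kappa$, and that applying it to $v$ lands in the finite-dimensional homogeneous component $V(m,k-m)_{(la,l(a-1))}$. The cancellation of poles in $\kappa$ is delicate because the rational functions of $\kappa$ that appear (coming from repeated use of the commutation identity, which introduces factors like $h - s + 1$ evaluated on weight vectors, i.e.\ linear-in-$\kappa$ quantities in denominators after normalizing) must be shown to combine into polynomials; this is exactly the content of the original MFF argument and is genuinely nontrivial. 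Step (3), while computationally heavy, is conceptually a telescoping identity once the bookkeeping of exponents is organized correctly --- the pattern $l+(a-1)\kappa,\ l+(a-2)\kappa,\ldots$ is designed so that each commutator term produced by pushing $e_i$ to the right cancels against the adjacent one.
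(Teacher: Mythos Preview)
The paper does not prove Theorem~\ref{thm MFF} at all: it is stated as a result of Malikov--Feigin--Fuchs with a citation to~\cite{MFF}, followed by the remark ``An explanation of the meaning of complex powers in these formulas see in~\cite{MFF}.'' There is therefore no proof in the paper to compare your proposal against; the theorem functions here purely as background.

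That said, your outline is broadly faithful to the original MFF argument, so it is a reasonable sketch of how one would prove the cited result. A few points of imprecision are worth flagging. First, in step~(4) the automorphism $\pi$ of Section~\ref{aut} is \emph{not} the one that swaps $f_1\leftrightarrow f_2$; the correct map is the Dynkin diagram involution $\rho$ (introduced later in the paper, Section~\ref{sec PROOF}), and you effectively correct yourself mid-sentence. Second, your description of step~(2) speaks of ``linear-in-$\kappa$ quantities in denominators after normalizing'' --- this misrepresents the mechanism. In the MFF argument no rational functions of $\kappa$ with genuine poles arise: one works in an extension of $U\hat{\mathfrak n}_-$ by formal powers $f_i^s$, and the point is that the specific alternating product, when rewritten via the commutation identity $e f^s = f^s e + s f^{s-1}(h-s+1)$ and its iterates, collapses to an honest element of $U\hat{\mathfrak n}_-$ with polynomial dependence on $\kappa$. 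The subtlety is combinatorial (tracking that the non-integer powers cancel pairwise), not analytic (cancelling poles). Third, your step~(3) is correct in spirit: commuting $e_i$ through the product does produce a telescoping sum, and the chosen arithmetic progression of exponents is exactly what makes the boundary terms vanish on the highest-weight vector with the stated $m$.

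In short: there is nothing to compare, and your sketch is a serviceable summary of~\cite{MFF}, modulo the two clarifications above.
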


An explanation of the meaning of complex powers in these formulas see in~\cite{MFF}.

For example for $m=-2+\kappa$, we have
\begin{gather*}
F_{21}(1,1,\kappa)v=f_2v=\frac{e}{T}v,
\end{gather*}
and for $m=-2+2\kappa$, we have
\begin{gather*}
F_{21}(1,2,\kappa)v=f_2^{1+\kappa} f_1f_2^{1-\kappa}v=f\left(\frac{e}{T}\right)^2 v+(1+\kappa)\frac{h}{T} \frac{e}{T}v-(1+\kappa)\kappa\frac{e}{T^2}v.
\end{gather*}

\subsection{Shapovalov form}\label{contra}
The {\em Shapovalov form} on an $\widehat{{\mathfrak{sl}_2}}$ Verma module $V$ with generating vector $v$ is the unique symmetric bilinear form $S(\cdot,\cdot)$ on $V$ such that
\begin{gather*}
S(v,v)=1,\qquad S(f_ix,y)=S(x,e_iy) \qquad \text{for} \ i=1,2;\ x,y\in V.
\end{gather*}

For $\gamma\in{\mathbb Z}^2_{\geq 0}$, let $V^*_{\gamma}$ be the vector space dual to $V_{\gamma}$. Define $V^*=\oplus_\gamma V_{\gamma}^*$. The space $V^*$ is an $\widehat{{\mathfrak{sl}_2}}$-module with the $\widehat{{\mathfrak{sl}_2}}$-action defined by the formulas:
\begin{gather*}
\langle f_i\phi,x\rangle=\langle \phi,e_ix\rangle, \qquad \langle e_i\phi,x\rangle=\langle\phi,f_ix\rangle ,
\end{gather*}
where $\phi\in V^*$, $x\in V$, $i=1,2 $. The $\widehat{{\mathfrak{sl}_2}}$-module $V^*$ is called the {\em contragradient} Verma module.

The Shapovalov form $S$ considered as a map $S\colon V\longrightarrow V^*$ is a morphism of $\widehat{{\mathfrak{sl}_2}}$-modules.

\subsection[Bases in $V$ and $V^*$]{Bases in $\boldsymbol{V}$ and $\boldsymbol{V^*}$}\label{BASIS}
Let $V$ be an $\widehat{{\mathfrak{sl}_2}}$ Verma module $V$. For every $\gamma=(p_1,p_2)\in{\mathbb Z}^2_{\geq 0}$ with $p_1\ne p_2$, we fix a basis in the homogeneous component $V_{\gamma}\subset V$.

For $p_1>p_2$, we fix the basis
\begin{gather*}
\left\{\frac{f}{T^{i_1}}\cdots \frac{f}{T^{i_a}} \frac{h}{T^{j_1}}\cdots\frac{h}{T^{j_b}} \frac{e}{T^{k_1}}\cdots\frac{e}{T^{k_c}}v \right\},
\end{gather*}
where
\begin{gather}
0\leq i_a\leq i_{a-1}\leq\dots\leq i_1,\quad 1\leq j_b\leq j_{b-1}\leq\dots\leq j_1,\quad 1\leq k_c\leq k_{c-1}\leq\dots\leq k_1; \nonumber\\
\sum_{s=1}^a i_s + \sum_{s=1}^b j_s + \sum_{s=1}^c k_s + a - c = p_1, \qquad \sum_{s=1}^a i_s + \sum_{s=1}^b j_s + \sum_{s=1}^c k_s = p_2.\label{indices}
\end{gather}
For $p_1 < p_2$, we fix the basis
\begin{gather*}
\left\{\frac{e}{T^{k_1}}\cdots\frac{e}{T^{k_c}} \frac{h}{T^{j_1}}\cdots\frac{h}{T^{j_b}} \frac{f}{T^{i_1}}\cdots\frac{f}{T^{i_a}}v\right\},
\end{gather*}
with the indices satisfying~(\ref{indices}). Notice that for any $x\in\mathfrak{sl}_2$ the elements $\frac x{T^i}$ and $\frac x{T^j}$ commute.

These collections of vectors are bases by the Poincar\'{e}--Birkhoff--Witt theorem.

\looseness=-1 For any $\gamma$, we fix a basis in the $\gamma$-homogeneous component $V_\gamma^*\subset V^*$ as the basis dual of the basis in $V_\gamma$ specified above.
If $\{w_i\}$ is a basis in $V_\gamma$, then we denote by $\{(w_i)^*\}$ the dual basis in~$V^*_\gamma$.

\subsection{Main formula}
\begin{thm}[{\cite[Theorem 5.12]{SV2}}]\label{thm form}
For $m, k\in{\mathbb C}$ and $a\in{\mathbb Z}_{>0}$, the following identities hold in the contragradient Verma module $V(m,k-m)^*$,
\begin{gather}
\frac{f}{T^{a-1}} (v)^* = (m+(a-1)(k+2))\left(\frac{f}{T^{a-1}}v\right)^* \nonumber \\
\hphantom{\frac{f}{T^{a-1}} (v)^* =} + \sum_{\ell=1}^{a-1}\bigg[\frac{h}{T^\ell}\left(\frac{f}{T^{a-1-\ell}}v\right)^* + 2 \frac{e}{T^\ell}\mathop{\sum_{i+j=a-1-\ell}}_{i\geq j\geq 0} \left(\frac{f}{T^i}\frac{f}{T^j}v\right)^*\bigg], \label{id A}\\
\frac{e}{T^a} (v)^*=(a(k+2)-m-2) \left(\frac{e}{T^a}v\right)^* \nonumber\\
\hphantom{\frac{e}{T^a} (v)^*=} + \sum_{\ell=0}^{a-2}\bigg[{-} \frac{h}{T^{\ell+1}} \left(\frac{e}{T^{a-\ell-1}}v\right)^* + 2 \frac{f}{T^\ell}\mathop{\sum_{i+j=a-\ell}}_{i\geq j\geq 1} \left(\frac{e}{T^i}\frac{e}{T^j}v\right)^*\bigg],\label{id B}
\end{gather}
where $v$ is the generating vector of the Verma module $V(m,k-m)$.
\end{thm}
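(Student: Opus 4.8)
The plan is to prove \eqref{id A}; identity \eqref{id B} is proved by an entirely analogous computation with the roles of $e$ and $f$ interchanged and the indices shifted, so I treat only \eqref{id A}. Both sides lie in the finite-dimensional space $V^*_{(a,a-1)}$, so it suffices to pair them with each vector $x$ of the fixed PBW basis of $V_{(a,a-1)}$. The contragradient action together with the Chevalley anti-involution $\omega$ (which satisfies $\omega(fT^{-j}) = eT^{j}$, $\omega(hT^{-j}) = hT^{j}$, $\omega(eT^{-j}) = fT^{j}$) --- equivalently, the facts $(v)^* = S(v)$ and that $S\colon V\to V^*$ is a morphism --- turns every such pairing into a structure constant of $U\hat{\mathfrak{n}}_-$: the left-hand side of \eqref{id A} becomes the coefficient of $v$ in $eT^{a-1}x$, while the three groups of terms on the right become, respectively, the coefficient of $\tfrac{f}{T^{a-1}}v$ in $x$, of $\tfrac{f}{T^{a-1-\ell}}v$ in $hT^{\ell}x$, and of $\tfrac{f}{T^{i}}\tfrac{f}{T^{j}}v$ in $fT^{\ell}x$, all with respect to the fixed PBW bases of the relevant homogeneous components. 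Thus \eqref{id A} is equivalent to a family of scalar identities inside $U\hat{\mathfrak{n}}_-$ indexed by $x$, with $V^*$ no longer present.

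To compute these coefficients I would commute the incoming current mode to the right through the monomial $x$, using $[eT^{p},fT^{-i}] = hT^{\,p-i} + p\,\delta_{i,p}\,c$, $[eT^{p},hT^{-j}] = -2\,eT^{\,p-j}$, $[eT^{p},eT^{-k}] = 0$ --- and likewise for the $hT^{p}$ and $fT^{p}$ modes generated at later stages --- together with the highest-weight relations $eT^{p}v = hT^{q}v = fT^{q}v = 0$ for $p\ge 0$, $q\ge 1$, $hv = mv$, $cv = kv$. Each commutator either kills the term (an $e$ contracted against an $e$, or a nonnegative mode reaching $v$), or produces a scalar $k$, or converts an $f$-generator into an $h$-mode, or an $h$-generator into an $e$-mode, which must then be transported further right. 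Collecting the outcome according to the first generator of $x$ that the incoming mode meets, and recursing, should reproduce precisely the three groups on the right of \eqref{id A}. I would organize this as an induction on $a$ via the recursion $\tfrac{f}{T^{a-1}} = -\tfrac12\big[\tfrac{h}{T},\tfrac{f}{T^{a-2}}\big]$, the base case $a=1$ being the immediate identity $f(v)^* = m\,(fv)^*$. Already the cases $a = 1, 2$ show that the $\tfrac{h}{T^{\ell}}(\cdots)^*$ and $\tfrac{e}{T^{\ell}}(\cdots)^*$ summands are not ``diagonal'' --- they contribute to the pairing with many basis vectors at once --- and that the leading coefficient $m+(a-1)(k+2)$ appears only after incorporating the $\tfrac{h}{T^{\ell}}$-contributions, each a shift by $-2$, which convert the naive value $m+(a-1)k$ into $m+(a-1)\kappa$.

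The essential difficulty is bookkeeping, not any isolated step. Re-expanding $hT^{\ell}x$ and $fT^{\ell}x$ in the PBW bases forces one to reorder the newly created modes past the surviving generators of $x$, each move spawning further commutators, and one must verify that after all cancellations the totals match the prescribed nested sum $2\sum_{\ell}\tfrac{e}{T^{\ell}}\sum_{i+j=a-1-\ell,\ i\ge j\ge 0}(\cdots)$ with exactly the stated coefficients --- in particular the factor $2$, which reconciles the unordered splittings $i+j=a-1-\ell$ thrown up by the contractions with the symmetrized range $i\ge j$. Keeping the multi-index combinatorics under control, and disposing of the boundary cases (an index equal to $0$, or $i=j$, or $\ell$ at an end of its range), is what makes the argument long; it is the content of Section~\ref{sec PROOF}.
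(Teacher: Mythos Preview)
Your overall setup is exactly right and matches the paper: both sides lie in $V^*_{(a,a-1)}$, you pair with the fixed PBW basis of $V_{(a,a-1)}$, and the pairing rule $\langle \tfrac{g}{T^{\ell}}\phi,x\rangle=\langle\phi,\omega(g)T^{\ell}x\rangle$ (your anti-involution remark, the paper's Lemma~\ref{lem pere}) reduces everything to structure constants in $U\hat{\mathfrak n}_-$. The commutation table you write down is also the one the paper uses.

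Where you diverge is in the organization. You propose an induction on $a$ via $\tfrac{f}{T^{a-1}}=-\tfrac12[\tfrac hT,\tfrac f{T^{a-2}}]$. The paper does \emph{not} induct on $a$; instead it partitions the PBW basis of $V_{(a,a-1)}$ into four groups according to the number $r$ of $f$-factors in
\[
x=\frac{f}{T^{i_1}}\cdots\frac{f}{T^{i_r}}\,\frac{h}{T^{j_1}}\cdots\frac{h}{T^{j_s}}\,\frac{e}{T^{l_1}}\cdots\frac{e}{T^{l_{r-1}}}v,
\]
namely $r=1$ with $s=0$ (the single vector $\tfrac{f}{T^{a-1}}v$), $r=1$ with $s\ge1$, $r=2$, and $r\ge3$. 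For $r\ge3$ all pairings vanish by a one-line degree count; for $r=2$ each of the three terms contributes a fixed multiple of $2^{s}(m-lk)$ and these cancel; for $r=1$, $s=0$ one sees the constant $m+(a-1)(k+2)$ directly; the only genuinely delicate case is $r=1$, $s\ge2$, where the paper inducts on $s$ (not on $a$). This partition by $r$ is what tames the bookkeeping you flag: once $r$ is fixed the commutator cascade has a fixed shape, and one never has to re-expand a generic $hT^{\ell}x$ or $fT^{\ell}x$ in full PBW form.

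Your induction on $a$ is not visibly wrong, but it is not clear it closes: applying $\tfrac hT$ and $\tfrac f{T^{a-2}}$ to the right-hand side of \eqref{id A} produces terms like $\tfrac hT\cdot\tfrac h{T^{\ell}}(\tfrac f{T^{a-2-\ell}}v)^*$ and $\tfrac hT\cdot\tfrac e{T^{\ell}}(\tfrac f{T^i}\tfrac f{T^j}v)^*$ that do not immediately regroup into the shape of \eqref{id A} with $a$ replaced by $a-1$; you would need auxiliary identities for the actions on $(\tfrac f{T^p}v)^*$ and $(\tfrac f{T^i}\tfrac f{T^j}v)^*$, which is a second layer of work the paper avoids.

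Finally, your remark that \eqref{id B} ``is proved by an entirely analogous computation'' is correct in spirit but the paper does something cleaner: it invokes the Dynkin-diagram automorphism $\rho$ (swapping $e_i\leftrightarrow e_{3-i}$, $f_i\leftrightarrow f_{3-i}$) and the induced isomorphism $V(k-m,m)^*\cong V(m,k-m)^*$ to transport \eqref{id A} in $V(k-m,m)^*$ directly to \eqref{id B} in $V(m,k-m)^*$, with no second computation at all.
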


Theorem~\ref{thm form} was announced in \cite{SV2}. The proof of Theorem~\ref{thm form} is the main result of this paper. The theorem is proved in Section~\ref{sec PROOF}.

\begin{rem} The right-hand sides of formulas~(\ref{id A}) and~(\ref{id B}) have the factors $m+(a-1)(k+2)$ and $a(k+2)-m-2$. The vanishing of these factors corresponds to the resonance conditions $m_i+(a-1)\kappa=0$ and $m_{n+1}+2-a\kappa=0$ for the de Rham complex in Section~\ref{reson}, if we recall that $\kappa=k+2$.
\end{rem}

\begin{rem}Theorem \ref{thm form} says that the action of the element $\frac{f}{T^{a-1}}$ of degree $(a,a-1)$ on the covector $(v)^*$ can be expressed in terms of the actions of the elements $\frac{h}{T^{l}}$ and $\frac{e}{T^{l}}$ of smaller degree on some other covectors. Similarly the action of the element $\frac{e}{T^{a}}$ of degree $(a-1,a)$ on the covector~$(v)^*$ can be expressed in terms of the actions of the elements $\frac{h}{T^{l}}$, $\frac{f}{T^{l}}$ of smaller degree on some other~covectors.
\end{rem}

\subsection{Relation to Malikov--Feigin--Fuchs vectors}\label{rel to MFF}
Let
\begin{gather*}
S \colon \ V(m,k-m)\to V(m,k-m)^*
\end{gather*}
be the Shapovalov form.
Denote
\begin{gather*}
X_a(m,k-m) = S^{-1}\left((m+(a-1)(k+2))\left(\frac{f}{T^{a-1}}v\right)^*\right),\\
Y_a(m,k-m) = S^{-1}\left((m+2-a(k+2))\left(\frac{e}{T^a}v\right)^*\right).
\end{gather*}
For generic values of $m$ and $k$, the Shapovalov form $S$ is non-degenerate and $X_a$ and $Y_a$ are well defined elements of~$V(m,k-m)$. The chosen basis in $V(m,k-m)$ allows us to compare these vectors for different values of~$k$,~$m$. The vectors $X_a(m,k-m)$, $Y_a(m,k-m)$ are holomorphic functions of~$k$,~$m$ for generic~$k$,~$m$.

Recall the resonance lines in the $(m,k)$-plane, given by the equations
\begin{gather*}
m-l+1+(a-1)(k+2)=0, \qquad m+l+1-a(k+2)=0, \qquad k+2=0,
\end{gather*}
for some $a, l\in{\mathbb Z}_{>0}$, see Section \ref{reduc}.

\begin{thm}[{\cite[Theorem 6.2]{SV2}}]\label{limit}For $a\in{\mathbb Z}_{>0}$ let $(m_0, k_0)$ be a point of the line $ m+(a-1)(k+2)=0$, which does not belong to other resonance lines. Then the function $X_a(m,k-m)$ can be analytically continued to the point $(m_0,k_0)$, and $X_a(m_0,k_0-m_0)$ is a (nonzero) singular vector of $V(m_0,k_0-m_0)$, hence it is proportional to the Malikov--Feigin--Fuchs vector $F_{12}(1,a, k_0+2)$.

Similarly, for $a\in{\mathbb Z}_{>0}$ let $(m_0, k_0)$ be a point of the line $m+2-a(k+2)=0$, which does not belong to other resonance lines. Then the function $Y_a(m,k-m)$ can be analytically continued to the point $(m_0,k_0)$, and $Y_a(m_0,k_0-m_0)$ is a $($nonzero$)$ singular vector of $V(m_0,k_0-m_0)$, hence it is proportional to the Malikov--Feigin--Fuchs vector $F_{21}(1,a, k_0+2)$.
\end{thm}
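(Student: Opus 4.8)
The plan is to prove the assertion about $X_a$; the one about $Y_a$ follows from the same argument applied to identity~(\ref{id B}), with $\frac e{T^a}v$, the line $m+2-a(k+2)=0$ and the singular vector $F_{21}(1,a,k_0+2)v$ replacing $\frac f{T^{a-1}}v$, the line $m+(a-1)(k+2)=0$ and $F_{12}(1,a,k_0+2)v$. Write $\kappa=k+2$, $\gamma=(a,a-1)$. The first step is the holomorphic continuation. Since $S^{-1}$ is a morphism of $\widehat{{\mathfrak{sl}_2}}$-modules wherever it is defined, with $S^{-1}\big((v)^*\big)=v$, solving~(\ref{id A}) for $(m+(a-1)\kappa)\big(\frac f{T^{a-1}}v\big)^*$ and applying $S^{-1}$ gives
\begin{multline*}
X_a(m,k-m)=\frac f{T^{a-1}}v\\
-\sum_{\ell=1}^{a-1}\bigg[\frac h{T^\ell}\,S^{-1}\Big(\big(\tfrac f{T^{a-1-\ell}}v\big)^*\Big)+2\frac e{T^\ell}\mathop{\sum_{i+j=a-1-\ell}}_{i\ge j\ge 0}S^{-1}\Big(\big(\tfrac f{T^i}\tfrac f{T^j}v\big)^*\Big)\bigg].
\end{multline*}
Every covector here to which $S^{-1}$ is applied lies in a homogeneous component $V_\beta^*$ with second coordinate $\le a-2$, hence with $\gamma\not\le\beta$. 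Because $(m_0,k_0)$ lies on no resonance line other than $m+(a-1)\kappa=0$, Section~\ref{reduc} shows that the maximal proper submodule of $V(m_0,k_0-m_0)$ is generated by a singular vector of degree $\gamma$, so it meets $V_\beta$ trivially whenever $\gamma\not\le\beta$; equivalently $S|_{V_\beta}$ is non-degenerate at $(m_0,k_0)$ and $S^{-1}|_{V_\beta^*}$ is holomorphic near $(m_0,k_0)$. Therefore $X_a(m,k-m)$ extends holomorphically to $(m_0,k_0)$.

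The second step is to see that the extended vector is killed by $e_1$ and $e_2$. For generic $(m,k)$, since $S^{-1}$ is a module map, $e_iX_a=(m+(a-1)\kappa)\,S^{-1}\big(e_i(\tfrac f{T^{a-1}}v)^*\big)$ for $i=1,2$, and $e_i(\tfrac f{T^{a-1}}v)^*$ again lies in a component $V_\beta^*$ with $\gamma\not\le\beta$. By the first step the right-hand side is a holomorphic vector-valued function near $(m_0,k_0)$ multiplied by the scalar $m+(a-1)\kappa$, which vanishes at $(m_0,k_0)$; hence $e_1X_a(m_0,k_0-m_0)=e_2X_a(m_0,k_0-m_0)=0$. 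Being homogeneous of degree $\gamma$ and not proportional to $v$, the vector $X_a(m_0,k_0-m_0)$ is therefore either $0$ or a nonzero scalar multiple of the (by Section~\ref{reduc} unique, up to scalar) singular vector of $V(m_0,k_0-m_0)$, which by Theorem~\ref{thm MFF} is $F_{12}(1,a,\kappa_0)v$ up to a scalar.

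It remains to rule out the value $0$, and this is the hard part. Work in the chosen basis of $V_\gamma$ and the dual basis of $V_\gamma^*$; let $\mathsf S(m,k)$ be the symmetric matrix of $S|_{V_\gamma}$ and $\mathbf e_\iota$ the coordinate vector of $\big(\frac f{T^{a-1}}v\big)^*$. Then $X_a=(m+(a-1)\kappa)(\det\mathsf S)^{-1}\operatorname{adj}(\mathsf S)\,\mathbf e_\iota$, and by the Kac--Kazhdan determinant formula the line $m+(a-1)\kappa=0$ enters $\det\mathsf S$ with multiplicity $\dim V_{(0,0)}=1$; as $(m_0,k_0)$ lies on no other resonance line, near $(m_0,k_0)$ one has $\det\mathsf S=(m+(a-1)\kappa)\,u$ with $u$ holomorphic and $u(m_0,k_0)\ne0$, so $X_a(m_0,k_0-m_0)=u(m_0,k_0)^{-1}\operatorname{adj}\big(\mathsf S(m_0,k_0)\big)\mathbf e_\iota$. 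The matrix $\mathsf S(m_0,k_0)$ is symmetric of corank exactly $1$, its kernel being spanned by the coordinate vector $\mathbf w$ of $F_{12}(1,a,\kappa_0)v$ (the degree-$\gamma$ part of the maximal submodule is $\mathbb C\,F_{12}(1,a,\kappa_0)v$), so its adjugate equals $c\,\mathbf w\,\mathbf w^{\mathrm t}$ with $c\ne0$, and hence
\begin{gather*}
X_a(m_0,k_0-m_0)=c\,u(m_0,k_0)^{-1}\,\mathbf w_\iota\,F_{12}(1,a,\kappa_0)v,
\end{gather*}
where $\mathbf w_\iota$ is the coefficient of $\frac f{T^{a-1}}v$ in $F_{12}(1,a,\kappa_0)v$. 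Everything thus reduces to showing that this coefficient does not vanish. I would verify this by a direct computation with the Malikov--Feigin--Fuchs monomial $f_1^{1+(a-1)\kappa}f_2^{1+(a-2)\kappa}\cdots f_1^{1-(a-1)\kappa}$ applied to $v$: moving each factor $f_2=e/T$ to the right past the factors $f_1=f$ via $[e/T,f]=h/T$ and $[h/T,f]=-2f/T$ and collecting the $\frac f{T^{a-1}}v$-term, one finds its coefficient to be, up to sign, a product of linear forms in $\kappa$ whose zeros all lie on the remaining resonance lines; since $(m_0,k_0)$ avoids those, the coefficient is nonzero there. (Alternatively, the nonvanishing can be extracted from the fact that each resonance imposes a genuinely new relation among cohomology classes of the de Rham complex, cf.~\cite[Corollary~6.4]{SV2}, transported through the monomorphism of Theorem~\ref{thm hom}.)
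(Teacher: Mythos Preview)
The paper does not give its own proof of Theorem~\ref{limit}; the theorem is stated with attribution to \cite[Theorem~6.2]{SV2}, and the text immediately passes to Section~\ref{sec homs}. The paper's new proofs are those of Theorems~\ref{thm form} and~\ref{thm hom}, so there is nothing to compare your argument against here.

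That said, your argument is largely sound. Steps~1 and~2 are correct and efficient: using identity~(\ref{id A}) to rewrite $X_a$ through $S^{-1}$ applied only to components $V^*_\beta$ with $\gamma=(a,a-1)\not\le\beta$ is exactly the right move, and your observation that at $(m_0,k_0)$ the unique proper submodule is generated in degree $\gamma$ (Section~\ref{reduc}) gives the needed invertibility of $S|_{V_\beta}$. The annihilation by $e_1,e_2$ then follows cleanly from the scalar factor $m+(a-1)\kappa$.

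Step~3 has the right structure---Kac--Kazhdan determinant, the corank-$1$ adjugate identity $\operatorname{adj}(\mathsf S)=c\,\mathbf w\mathbf w^{\mathrm t}$, reduction to $\mathbf w_\iota\ne0$---but the final claim is only sketched. You assert that the coefficient of $\tfrac{f}{T^{a-1}}v$ in $F_{12}(1,a,\kappa)v$ is, up to sign, a product of linear forms in~$\kappa$ vanishing only on the \emph{other} resonance lines. For $a=2$ one can check directly that this coefficient equals $\kappa(1-\kappa)$, and that $\kappa_0\in\{0,1\}$ indeed forces $(m_0,k_0)$ onto another resonance line; but carrying out the Malikov--Feigin--Fuchs reordering for general~$a$ and identifying the linear factors with resonance conditions is a nontrivial computation that you have not actually done. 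Your alternative via the de Rham side and \cite[Corollary~6.4]{SV2} is likewise only indicated. So the nonvanishing remains a gap in your write-up, even though the overall strategy is the natural one.
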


\section{Homomorphism of complexes}\label{sec homs}

\subsection[Lie algebra $\mathfrak{sl}_2(U)$]{Lie algebra $\boldsymbol{\mathfrak{sl}_2(U)}$}\label{conf block}
Recall that $\{z_1,\dots,z_n,z_{n+1}=\infty\}$ are pairwise distinct points of the complex projective line~${\mathbb P}^1$ and $U={\mathbb P}^1 -\{z_1,\dots,z_n,z_{n+1}\}$. Fix local coordinates $t-z_1,\dots, t-z_n,1/t$ on ${\mathbb P}^1$ at these points, respectively.
Let $\mathfrak{sl}_2(U)$ be the Lie algebra of $\mathfrak{sl}_2$-valued rational functions on ${\mathbb P}^1$ regular on $U$, with the pointwise bracket. Thus, an element of $\mathfrak{sl}_2(U)$ has the form $e\otimes u_1 + h\otimes u_2+ f\otimes u_3$ with $u_i\in\Omega^0(U)$, and the bracket is defined by the formula $[x\otimes u_1, y\otimes u_2] = [x,y]\otimes (u_1u_2)$.

\subsection[$\mathfrak{sl}_2(U)$-modules]{$\boldsymbol{\mathfrak{sl}_2(U)}$-modules}
We say that an $\widehat{{\mathfrak{sl}_2}}$-module $W$ has the finiteness property, if for any $w\in W$ and $ x\in\mathfrak{sl}_2$, we have $xT^j \cdot w=0$ for all $j \gg 0$. For example, the contragradient Verma module has the finiteness property.

Let $W_1,\dots,W_{n+1}$ be $\widehat{{\mathfrak{sl}_2}}$-modules with the finiteness property. Then the Lie algebra~$\mathfrak{sl}_2(U)$ acts on $W_1\otimes\dots\otimes W_{n+1}$ by the formula
\begin{gather*}
x\otimes u \cdot (w_1\otimes\dots\otimes w_{n+1}) = \big([x\otimes u(t)]^{(z_1)} w_1\big)
\otimes w_2\otimes\dots\otimes w_{n+1}+ \cdots \nonumber\\
\hphantom{x\otimes u \cdot (w_1\otimes\dots\otimes w_{n+1}) =}{} + w_1\otimes\dots\otimes w_{n-1}\otimes \big([x\otimes u(t)]^{(z_n)} w_n\big)\otimes w_{n+1} \nonumber\\
\hphantom{x\otimes u \cdot (w_1\otimes\dots\otimes w_{n+1}) =}{} + w_1\otimes\dots\otimes w_n\otimes \big(\pi([x\otimes u(t)]^{(\infty)}) w_{n+1}\big), 
\end{gather*}
where for $x\otimes u \in\mathfrak{sl}_2(U)$ the symbol $[x\otimes u(t)]^{(z_j)}$ denotes the Laurent expansion of $x\otimes u$ at $t=z_j$ and $[x\otimes u(t)]^{(\infty)}$ denotes the Laurent expansion at $t=\infty$; the symbol $\pi$ in the last term denotes the $\widehat{{\mathfrak{sl}_2}}$-automorphism defined in Section \ref{aut}.

The finiteness property of the tensor factors ensures that the actions of the Laurent series are well-defined.

The $\widehat{{\mathfrak{sl}_2}}$-action gives us a map
\begin{gather}\label{mult}
\mu\colon \ \mathfrak{sl}_2(U)\otimes \big(\otimes_{j=1}^{n+1} W_j\big) \to \otimes_{j=1}^{n+1} W_j .
\end{gather}

\subsection{Chain complex}
For a Lie algebra ${\mathfrak g}$ and a ${\mathfrak g}$-module $W$ we denote by $C_{\bullet}({\mathfrak g}, W)$ the standard chain complex of ${\mathfrak g}$ with coefficients in $W$, where
\begin{gather*}
 C_p({\mathfrak g}, W) = \wedge ^p{\mathfrak g} \otimes W, \\
{\rm d}(g_p\wedge \dots\wedge g_1\otimes w) = \sum_{i=1}^p(-1)^{i-1} g_p\wedge \dots\wedge \widehat{g_i}\wedge\dots \wedge g_1\otimes g_iw \\
\hphantom{{\rm d}(g_p\wedge \dots\wedge g_1\otimes w) =}{} + \sum_{1\leq i<j\leq p}(-1)^{i+j} g_p\wedge \dots\wedge \widehat{g_j}\wedge\dots\wedge \widehat{g_i}\wedge\dots \wedge g_1\otimes [g_j,g_i]w.
\end{gather*}

\subsection{Two complexes}

\subsubsection{} Let $m_1,\dots,m_n, k\in{\mathbb C} $, $k+2\ne 0$. Define $m_{n+1}=m_1+\dots+m_n-2$. For $j=1,\dots, n+1$, let $V_j$ be the $\widehat{{\mathfrak{sl}_2}}$ Verma module $V(m_j,k-m_j)$ and $V_j^*$ the corresponding contragradient Verma module. Consider the chain complex $C_{\bullet}\big(\mathfrak{sl}_2(U), \otimes_{j=1}^{n+1}V_j^*\big)$ and its last two terms{\samepage
\begin{gather*}
\to\ \mathfrak{sl}_2(U) \otimes \big({\otimes}_{j=1}^{n+1}V_j^*)\overset{{\rm d}}{\longrightarrow} \otimes_{j=1}^{n+1}V_j^* \to 0,
\end{gather*}
where ${\rm d}=\mu$, see formula (\ref{mult}).}

We assign degree $0$ to the term $\otimes_{j=1}^{n+1}V_j^*$ of this complex and assign degree $1$ to the differential~${\rm d}$, so that the whole complex sits in the non-positive area.

\subsubsection{} Consider the twisted de Rham complex in (\ref{dr com}) corresponding to $\kappa=k+2$ with degrees shifted by~1, namely, the complex $\Omega^{\bullet}(U)[1]$,
\begin{gather*}
 0\to\Omega^0(U) \overset{\partial}{\longrightarrow} \Omega^1(U)\to 0,
\end{gather*}
where the shift $[1]$ means that we assign degree $p-1$ to the term $\Omega^p(U)$.

\subsection{Construction}\label{sec constr}
Define a linear map
\begin{gather*}
\eta^1\colon \ \Omega^1(U)\longrightarrow \otimes_{j=1}^{n+1} V_j^*
\end{gather*}
by the formulas
\begin{gather}
\frac{{\rm d}t}{(t-z_m)^{a}} \mapsto -\kappa (v_1)^*\otimes\dots\otimes \left(\frac{f}{T^{a-1}}v_m\right)^* \otimes\dots\otimes (v_{n+1})^*, \label{eta 1f} \\
t^{a-1}{\rm d}t \mapsto \kappa (v_1)^*\otimes\dots\otimes (v_n)^* \otimes \left(\frac{e}{T^{a}}v_{n+1}\right)^*, \label{eta 1i}
\end{gather}
for $a> 0$. Define a linear map
\begin{gather*}
\eta^0\colon \ \Omega^0(U)\longrightarrow \mathfrak{sl}_2(U)\otimes \big({\otimes}_{j=1}^{n+1} V_j^*\big)
\end{gather*}
by the formulas
\begin{gather}
\frac{1}{(t-z_m)^a} \mapsto \frac{f}{(t-z_m)^a}\otimes (v_1)^*\otimes\dots\otimes (v_{n+1})^*\nonumber\\
\hphantom{\frac{1}{(t-z_m)^a} \mapsto}{} -\sum_{l=1}^a\bigg[\frac{e}{(t-z_m)^l}\otimes (v_1)^*\otimes \dots\otimes 2\sum_{i+j=a-l \atop i\geq j\geq 0} \left(\frac{f}{T^i}\frac{f}{T^j}v_m\right)^*\otimes\dots\otimes (v_{n+1})^* \nonumber \\
\hphantom{\frac{1}{(t-z_m)^a} \mapsto}{} {} +\frac{h}{(t-z_m)^l}\otimes (v_1)^*\otimes \dots\otimes \left(\frac{f}{T^{a-l}}v_m\right)^*\otimes \dots\otimes (v_{n+1})^* \bigg],\label{eta 0f}
\end{gather}
for $a>0$;
\begin{gather}
t^a \mapsto ft^a\otimes (v_1)^*\otimes\dots\otimes (v_{n+1})^*\nonumber \\
\hphantom{t^a \mapsto}{} - \sum_{l=0}^{a-2}\bigg[et^l\otimes (v_1)^*\otimes\dots\otimes
(v_n)^*\otimes 2\sum_{i+j=a-l,\atop i\geq j\geq 1} \left(\frac{e}{T^i} \frac{e}{T^j}v_{n+1}\right)^* \nonumber \\
\hphantom{t^a \mapsto}{} + ht^{l+1}\otimes (v_1)^*\otimes\dots\otimes (v_n)^*\otimes \left(\frac{e}{T^{a-l-1}}v_{n+1}\right)^* \bigg],\label{eta 0i}
\end{gather}
for $a\geq 0$.

\begin{thm}[{\cite[Theorem 5.12]{SV2}}]\label{thm hom} Formulas \eqref{eta 1f}--\eqref{eta 0i} define a homomorphism of complexes $\eta\colon \Omega^{\bullet}(U)[1] \to C_{\bullet}\big(\mathfrak{sl}_2(U);\otimes_{j=1}^{n+1} V_j^*\big)$, namely we have
\begin{gather*}
{\rm d} \eta^0 = \eta^1\partial.
\end{gather*}
The homomorphism is injective.
\end{thm}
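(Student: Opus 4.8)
The plan is to prove the two assertions --- the chain-map identity $\mathrm d\eta^0=\eta^1\partial$ (on $C_1$ the differential $\mathrm d$ is just $\mu$), and injectivity --- separately. For the identity it suffices, by linearity, to check it on the basis $\{(t-z_m)^{-a}\colon 1\le m\le n,\ a>0\}\cup\{t^a\colon a\ge 0\}$ of $\Omega^0(U)$. Fix such a $\phi$. On the right I would compute $\partial\phi$ from the explicit formulas \eqref{bff}, \eqref{bfi} and then apply $\eta^1$ via \eqref{eta 1f}, \eqref{eta 1i}, obtaining an explicit linear combination of pure tensors of basis covectors whose ``distinguished'' factor is the $m$-th one when $\phi=(t-z_m)^{-a}$ and the $(n{+}1)$-st one when $\phi=t^a$. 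On the left I would expand $\mu(\eta^0(\phi))$ by bilinearity of $\mu$ into a sum over (a summand of $\eta^0(\phi)$) $\times$ (an expansion point among $z_1,\dots,z_n,\infty$), using the definition of the $\mathfrak{sl}_2(U)$-action, and match the two.

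The computation is governed by three observations. First, at the distinguished point the Laurent expansion of each coefficient $x(t-z_m)^{-\ell}$, resp.\ $xt^\ell$ ($x\in\{e,h,f\}$), is the exact monomial $\tfrac{x}{T^\ell}$ in the local coordinate there, with no higher-order terms; hence the leading summand $\tfrac{f}{(t-z_m)^{a}}\otimes(\cdots)$, resp.\ $ft^{a}\otimes(\cdots)$, of $\eta^0(\phi)$ produces in the distinguished slot the covector $\tfrac{f}{T^{a}}(v_m)^*$, resp.\ $\tfrac{e}{T^{a}}(v_{n+1})^*$, to which Theorem~\ref{thm form} applies verbatim (formula \eqref{id A} with parameter $a+1$, resp.\ \eqref{id B} with parameter $a$). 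This yields the leading term $(m_m+a\kappa)\big(\tfrac{f}{T^a}v_m\big)^*$, resp.\ $(a\kappa-m_{n+1}-2)\big(\tfrac{e}{T^a}v_{n+1}\big)^*$, of $\eta^1\partial\phi$, together with ``junk'' terms $\tfrac{h}{T^\ell}(\cdots)$, $2\tfrac{e}{T^\ell}(\cdots)$, resp.\ $-\tfrac{h}{T^{\ell+1}}(\cdots)$, $2\tfrac{f}{T^\ell}(\cdots)$. Second, the correction terms of $\eta^0(\phi)$ in \eqref{eta 0f}, resp.\ \eqref{eta 0i}, evaluated at the distinguished point --- again using exactness of the monomials --- reproduce exactly these junk terms with the opposite sign, so they cancel. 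Third, at every other finite point $z_j$ the generating covector $(v_j)^*$ spans the bottom component of the $\mathbb Z^2_{\ge0}$-grading of $V_j^*$, hence is killed by $\tfrac{e}{T^p}$ for $p\ge 0$ and by $\tfrac{h}{T^p}$, $\tfrac{f}{T^p}$ for $p\ge 1$; so the only surviving contribution of a coefficient expanded at $z_j$ is its constant term times $xT^0(v_j)^*$, which is $0$ for $x=e$, is $m_j(v_j)^*$ for $x=h$, and is $m_j\big(\tfrac{f}{T^0}v_j\big)^*$ for $x=f$. Collecting these over $j$ and over the summands of $\eta^0(\phi)$ gives precisely the remaining (``$\sum_{j\neq m}$''-type, resp.\ ``$\sum_{j=1}^n$''-type) terms of $\eta^1\partial\phi$ coming from \eqref{bff}, \eqref{bfi}. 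Finally, for $\phi=(t-z_m)^{-a}$ the point $\infty$ contributes nothing, since after the twist $\pi$ one gets operators $\pi(xT^p_\infty)$ with $p\ge\ell\ge 1$, each killing $(v_{n+1})^*$; for $\phi=t^a$ the point $\infty$ is itself the distinguished point, handled as above using $\pi(\tfrac{f}{T^\ell})=\tfrac{e}{T^\ell}$, $\pi(\tfrac{e}{T^\ell})=\tfrac{f}{T^\ell}$, $\pi(\tfrac{h}{T^\ell})=-\tfrac{h}{T^\ell}$. Putting these together will give $\mu(\eta^0(\phi))=\eta^1(\partial\phi)$; the degenerate cases $a=0,1$ I would check directly (for $a=0$ the identity reduces to $\partial 1=\alpha$).

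For injectivity it is enough to show that $\eta^0$ and $\eta^1$ are each injective. The map $\eta^1$ sends the basis of $\Omega^1(U)$ to pairwise distinct pure tensors of basis vectors of $\otimes_{j=1}^{n+1}V_j^*$ --- with a single non-generating factor $\big(\tfrac{f}{T^{a-1}}v_m\big)^*$, resp.\ $\big(\tfrac{e}{T^a}v_{n+1}\big)^*$, in one slot --- each scaled by the nonzero constant $\pm\kappa$, and such tensors are linearly independent; so $\eta^1$ is injective. For $\eta^0$ I would use the projection $\Pi$ of $\mathfrak{sl}_2(U)\otimes\big(\otimes_j V_j^*\big)=\mathfrak{sl}_2\otimes\Omega^0(U)\otimes\big(\otimes_j V_j^*\big)$ onto the direct summand $\mathbb{C}f\otimes\Omega^0(U)\otimes\mathbb{C}\big((v_1)^*\otimes\cdots\otimes(v_{n+1})^*\big)$, identified with $\Omega^0(U)$: every correction term in \eqref{eta 0f}, \eqref{eta 0i} carries $e$ or $h$ in the $\mathfrak{sl}_2$-factor (and moreover a non-generating covector), so $\Pi$ annihilates all of them and $\Pi\circ\eta^0=\mathrm{id}_{\Omega^0(U)}$, whence $\eta^0$ is injective.

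The main obstacle I expect is not conceptual but organizational: in the chain-map identity one must track, for each basis element $\phi$, summand by summand and point by point, which operator acts on which tensor factor with which coefficient, and then see the junk cancel and the remainder reassemble into $\eta^1\partial\phi$. What tames this --- the exactness of the Laurent monomials at the distinguished point (so Theorem~\ref{thm form} applies with no error term) and the bottom-weight vanishing at the remaining points (so only constant terms survive there) --- is slight; the effort is all in the index bookkeeping (re-indexing $\sum_{k=1}^{a}$ against $\sum_{\ell=1}^{a}$, matching the ranges of the double sums $\sum_{i+j=a-\ell}$, and carrying the $\pi$-twist at $\infty$).
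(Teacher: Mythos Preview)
Your approach is essentially the paper's: verify $\mathrm d\eta^0=\eta^1\partial$ on the basis $\{(t-z_m)^{-a},\,t^a\}$ of $\Omega^0(U)$ by invoking Theorem~\ref{thm form} (formulas \eqref{id A}, \eqref{id B}) at the distinguished expansion point so that the correction terms in \eqref{eta 0f}, \eqref{eta 0i} cancel the ``junk'', and by using the bottom-weight vanishing at the remaining points so that only constant Laurent coefficients survive; injectivity is then immediate (the paper simply says ``clearly'', your retraction argument for $\eta^0$ is a clean way to make this explicit). One notational slip to fix: at a non-distinguished finite point $z_j$ the Taylor expansion of $x/(t-z_m)^\ell$ produces the operators $xT^p$ with $p\ge 0$, not $\tfrac{x}{T^p}$, and it is $eT^p$ $(p\ge0)$, $hT^p$, $fT^p$ $(p\ge1)$ that annihilate $(v_j)^*$ --- your own phrase ``constant term times $xT^0(v_j)^*$'' shows you have the right picture, but the earlier sentence should be corrected accordingly.
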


Theorem \ref{thm hom} was announced in \cite{SV2}. Here is a proof of the theorem.

\begin{proof} First we calculate $ \eta^1(\partial((t-z_p)^{-a}))$,
\begin{gather*}
\frac 1{(t-z_p)^{a}} \overset{\partial} \mapsto -\frac{1}{\kappa}(m_p+a\kappa) \frac{{\rm d}t}{(t-z_p)^{a+1}} +\frac{1}{\kappa} \sum_{k=1}^a \sum_{j\neq p}\frac{m_j}{(z_j-z_p)^k}
\frac{{\rm d}t}{(t-z_p)^{a+1-k}} \nonumber \\
\hphantom{\frac 1{(t-z_p)^{a}} \overset{\partial} \mapsto}{} - \frac{1}{\kappa} \sum_{j\neq p} \frac{m_j}{(z_j-z_p)^a}\frac{{\rm d}t}{t-z_j} \nonumber \\
\hphantom{\frac 1{(t-z_p)^{a}}}{}\overset{\eta^1} \mapsto (m_p+\kappa a) (v_1)^* \otimes\dots\otimes\left(\frac{f}{T^a}v_p \right)^* \otimes\dots\otimes (v_{n+1})^* \nonumber \\
\hphantom{\frac 1{(t-z_p)^{a}} \overset{\partial} \mapsto}{} - \sum_{k=1}^a \sum_{j \neq p}\frac{ m_j}{(z_j-z_p)^k} (v_1)^* \otimes\dots\otimes \left(\frac{f}{T^{a-k}} v_p \right)^* \otimes\dots\otimes (v_{n+1})^* \nonumber\\
\hphantom{\frac 1{(t-z_p)^{a}} \overset{\partial} \mapsto}{} + \sum_{j\neq p}\frac{ m_j}{(z_j-z_p)^a} (v_1)^* \otimes\dots\otimes (fv_j)^* \otimes\dots\otimes (v_{n+1})^*. \nonumber
\end{gather*}
Then we calculate ${\rm d}\big(\eta^0((t-z_p)^{-a})\big)$,
\begin{gather*}
\frac1{(t-z_p)^{a}} \overset{\eta^0}\mapsto \frac{f}{(t-z_p)^a} \otimes (v_1)^* \otimes\dots\otimes (v_{n+1})^* \nonumber \\
\hphantom{\frac1{(t-z_p)^{a}} \overset{\eta^0}\mapsto}{} - \sum_{l=1}^a \bigg[ \frac{h}{(t-z_p)^l} \otimes (v_1)^* \otimes\dots\otimes \left( \frac{f}{T^{a-l}} v_p \right)^* \otimes\dots\otimes (v_{n+1})^*
\nonumber \\
\hphantom{\frac1{(t-z_p)^{a}} \overset{\eta^0}\mapsto}{} + \frac{e}{(t-z_p)^l} \otimes 2 \sum_{i+j = a-l \atop i \geq j \geq 0} (v_1)^* \otimes\dots\otimes \left(\frac{f}{T^i} \frac{f}{T^j} v_p \right)^* \otimes\dots\otimes (v_{n+1})^* \bigg] \nonumber \\
\hphantom{\frac1{(t-z_p)^{a}}}{}
\overset{\mu} \mapsto (v_1)^* \otimes\dots\otimes \bigg[ (m_p + a(k+2)) \left(\frac{f}{T^a} v_p \right)^* \nonumber \\
\hphantom{\frac1{(t-z_p)^{a}} \overset{\eta^0}\mapsto}{} + \sum_{l=1}^a \bigg[\frac{h}{T^l}\left(\frac{f}{T^{a-l}}v_p\right)^*+
2\frac{e}{T^l}\sum_{i+j=a-l \atop i\geq j\geq 0 } \left(\frac{f}{T^i}\frac{f}{T^j} v_p\right)^*\bigg] \bigg] \otimes\dots\otimes (v_{n+1})^* \nonumber \\
\hphantom{\frac1{(t-z_p)^{a}} \overset{\eta^0}\mapsto}{} + \sum_{j\neq p} \frac{m_j}{(z_j-z_p)^a} (v_1)^* \otimes\dots\otimes (fv_j)^* \otimes\dots\otimes (v_{n+1})^* \nonumber \\
\hphantom{\frac1{(t-z_p)^{a}} \overset{\eta^0}\mapsto}{}
 - \sum_{l=1}^a \bigg[ (v_1)^* \otimes\dots\otimes \frac{h}{T^l}\left(\frac{f}{T^{a-l}}v_p \right)^* \otimes\dots\otimes (v_{n+1})^* \nonumber \\
\hphantom{\frac1{(t-z_p)^{a}} \overset{\eta^0}\mapsto}{} + (v_1)^* \otimes\dots\otimes \sum_{i+j=a-l \atop i\geq j\geq 0} 2\frac{e}{T^l} \left(\frac{f}{T^i}\frac{f}{T^j}v_p\right)^*\otimes\dots\otimes (v_{n+1})^*
\nonumber \\
\hphantom{\frac1{(t-z_p)^{a}} \overset{\eta^0}\mapsto}{} + \sum_{j \neq p} \frac{m_j}{(z_j-z_p)^l} (v_1)^* \otimes\dots\otimes \left(\frac{f}{T^{a-l}} v_p \right)^* \otimes\dots\otimes (v_{n+1})^* \bigg] \nonumber \\
\hphantom{\frac1{(t-z_p)^{a}}}{} = (\kappa a + m_p) (v_1)^* \otimes\dots\otimes\left(\frac{f}{T^a}v_p \right)^* \otimes\dots\otimes (v_{n+1})^* \nonumber \\
\hphantom{\frac1{(t-z_p)^{a}} \overset{\eta^0}\mapsto}{} - \sum_{l=1}^a \sum_{j \neq p} \frac{m_j}{(z_j-z_p)^l} (v_1)^* \otimes\dots\otimes \left(\frac{f}{T^{a-l}} v_p \right)^* \otimes\dots\otimes
(v_{n+1)}^* \nonumber \\
\hphantom{\frac1{(t-z_p)^{a}} \overset{\eta^0}\mapsto}{} + \sum_{j\neq p} \frac{ m_j}{(z_j-z_p)^a} (v_1)^* \otimes\dots\otimes (fv_j)^* \otimes\dots\otimes (v_{n+1})^*.
\end{gather*}
In this calculation we use formula (\ref{id A}) to express the action of $\frac{f}{T^a}$ on $(v_{p})^*$. These formulas show that ${\rm d}\big(\eta^0((t-z_p)^{-a})\big) = \eta^1\big(\partial((t-z_p)^{-a})\big)$.

Now we calculate $\eta^1(\partial(t^a))$,
\begin{gather*}
t^a \overset{\partial}{\mapsto} \frac{1}{\kappa}\bigg(a\kappa - \sum_{j=1}^n m_j\bigg) t^{a-1} {\rm d}t - \frac{1}{\kappa} \sum_{s=1}^{a-1} \sum_{j=1}^n m_j z_j^s
t^{a-s-1} {\rm d}t - \frac{1}{\kappa} \sum_{j=1}^n m_j z_j^a \frac{{\rm d}t}{t-z_j} \nonumber \\
\hphantom{t^a}{} \overset{\eta^1}{\mapsto} \bigg(a\kappa - \sum_{j=1}^n m_j\bigg) (v_1)^* \otimes \dots \otimes (v_n)^* \otimes \left(\frac{e}{T^a} v_{n+1}\right)^*\nonumber \\
\hphantom{t^a \overset{\partial}{\mapsto}}{}- \sum_{s=1}^{a-1} \sum_{j=1}^n m_j z_j^s (v_1)^* \otimes \dots \otimes (v_n)^* \otimes \left(\frac{e}{T^{a-s}} v_{n+1}\right)^* \nonumber \\
\hphantom{t^a \overset{\partial}{\mapsto}}{} + \sum_{j=1}^n m_j z_j^a (v_1)^* \otimes \dots \otimes (f v_j)^* \otimes\dots\otimes (v_{n+1})^*.
\end{gather*}

Then we calculate ${\rm d}\big(\eta^0(t^a)\big)$,
\begin{gather*}
t^a \overset{\eta^0}\mapsto ft^a\otimes (v_1)^*\otimes\dots\otimes (v_{n+1})^* - \sum_{l=0}^{a-2}\bigg[ht^{l+1}\otimes (v_1)^*\otimes\dots\otimes (v_n)^*\otimes \left(\frac{e}{T^{a-l-1}}v_{n+1}\right)^* \nonumber \\
\hphantom{t^a \overset{\eta^0}\mapsto}{} + et^l\otimes (v_1)^*\otimes\dots\otimes (v_n)^*\otimes 2\sum_{i+j=a-l\atop i\geq j\geq 1} \left(\frac{e}{T^i} \frac{e}{T^j}v_{n+1}\right)^* \bigg]
\nonumber \\
\hphantom{t^a}{}
\overset{\mu} \mapsto 	 \sum_{j=1}^n m_j z_j^a (v_1)^* \otimes \dots \otimes (f v_j)^* \otimes\dots\otimes (v_{n+1})^* \nonumber \\
\hphantom{t^a \overset{\eta^0}\mapsto}{} + (v_1)^* \otimes \dots \otimes (v_n)^* \otimes \bigg[\left(-m_{n+1}-2+a(k+2)\right) \left(\frac{e}{T^a}v_{n+1} \right)^* \nonumber \\
\hphantom{t^a \overset{\eta^0}\mapsto}{}+ \sum_{l=0}^{a-2} \bigg[{-} \frac{h}{T^{l+1}} \left(\frac{e}{T^{a-l-1}} v_{n+1} \right)^* + 2\frac{f}{T^l} \sum_{i+j=a-l\atop i\geq j\geq1} \left(\frac{e}{T^i} \frac{e}{T^j} v_{n+1} \right)^* \bigg] \bigg] \nonumber \\
\hphantom{t^a \overset{\eta^0}\mapsto}{}
-(v_1)^*\otimes\dots\otimes (v_n)^* \otimes \sum_{l=0}^{a-2} \bigg[ 2\frac{f}{T^l} \sum_{i+j=a-l\atop i\geq j\geq1} \left(\frac{e}{T^i} \frac{e}{T^j} v_{n+1} \right)^* - \frac{h}{T^{l+1}} \left(\frac{e}{T^{a-l-1}} v_{n+1} \right)^* \bigg] \nonumber \\
\hphantom{t^a \overset{\eta^0}\mapsto}{} - \sum_{s=1}^{a-1} \sum_{j=1}^n m_j z_j^s (v_1)^* \otimes \dots \otimes (v_n)^* \otimes \left(\frac{e}{T^{a-s}} v_{n+1}\right)^* \nonumber \\
\hphantom{t^a}{} = \bigg(a\kappa - \sum_{j=1}^n m_j\bigg) (v_1)^* \otimes \dots \otimes (v_n)^* \otimes \left(\frac{e}{T^a} v_{n+1}\right)^* \nonumber \\
\hphantom{t^a \overset{\eta^0}\mapsto}{} - \sum_{s=1}^{a-1} \sum_{j=1}^n m_j z_j^s (v_1)^* \otimes \dots \otimes (v_n)^* \otimes \left(\frac{e}{T^{a-s}} v_{n+1}\right)^* \nonumber \\
\hphantom{t^a \overset{\eta^0}\mapsto}{} + \sum_{j=1}^n m_j z_j^a (v_1)^* \otimes \dots \otimes (f v_j)^* \otimes\dots\otimes (v_{n+1})^*. \nonumber
\end{gather*}
In this calculation we use formula~(\ref{id B}) to express the action of $\frac{e}{T^a}$ on $(v_{n+1})^*$. Notice also that calculating the action on $V_{n+1}^*$ we use the automorphism~$\pi$, see Section~\ref{aut}.These formulas show that ${\rm d}\big(\eta^0(t^a)\big) = \eta^1(\partial(t^a))$.

Clearly the maps $\eta^1$, $\eta^2$ are injective. Theorem \ref{thm hom} is proved.
\end{proof}

\subsection{Image of logarithmic subcomplex}
Under the monomorphism $\eta$ of Theorem \ref{thm hom} the image of the logarithmic subcomplex $(\Omega^\bullet_{\log}(U), \partial)$ is the chain complex $C_{\bullet}\big({\mathfrak n}_-, \otimes_{j=1}^{n+1}V_j^*\big)$ of the nilpotent subalgebra ${\mathfrak n}_-\subset\mathfrak{sl}_2$ generated by~$f$. More precisely, we have
\begin{gather*}
\eta \colon \ 1\mapsto f\otimes (v_1)^*\otimes\dots\otimes(v_{n+1})^*, \qquad \frac{{\rm d}t}{x-t_j}\mapsto -\kappa(v_1)^*\otimes\dots\otimes (fv_j)^*\otimes\dots\otimes(v_{n+1})^*,
\end{gather*}
$j=1,\dots,n$, and
\begin{gather*}
\mu\colon \ f\otimes (v_1)^*\otimes\dots\otimes(v_{n+1})^* \mapsto \sum_{j=1}^n m_j (v_1)^*\otimes\dots\otimes(fv_j)^*\otimes\dots\otimes(v_{n+1})^*.
\end{gather*}

Far-reaching generalizations of this identification of the logarithmic subcomplex with the chain complex of the nilpotent Lie algebra ${\mathfrak n}_-$ see in~\cite{SV1}.

\section{Proof of Theorem \ref{thm form}}\label{sec PROOF}

\subsection{Formula (\ref{id B}) follows from formula (\ref{id A})}
The Lie algebra $\widehat{{\mathfrak{sl}_2}}$ has an automorphism $\rho$, corresponding to the involution of the Dynkin diagram:
\begin{gather*}
\rho(e_{i})=e_{3-i}, \qquad \rho(f_{i})=f_{3-i}, \qquad \rho(h_{i})=h_{3-i}, \qquad i=1,2.
\end{gather*}
We have $\rho^2={\rm id}$. In other words, $\rho$ acts by the formulas
\begin{gather*}
e\leftrightarrow fT, \qquad f \leftrightarrow e T^{-1}, \qquad h \leftrightarrow c-h.
\end{gather*}

\begin{lem}\label{lem elts}For $i \in \mathbb{Z}_{> 0}$, we have
\begin{gather*}
\rho \colon \ \frac{f}{T^i} \mapsto \frac{e}{T^{i+1}}, \qquad \frac{e}{T^i} \mapsto \frac{f}{T^{i-1}}, \qquad \frac{h}{T^i} \mapsto -\frac{h}{T^i}.
\end{gather*}
\end{lem}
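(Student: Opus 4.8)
The plan is to express each of the loop elements $f/T^i$, $e/T^i$, $h/T^i$ with $i>0$ as an iterated Lie bracket of the Chevalley generators $e_1=e$, $f_1=f$, $h_1=h$, $e_2=fT$, $f_2=eT^{-1}$, and then to apply $\rho$, using that $\rho$ is a Lie algebra homomorphism which is given explicitly on these generators by $e_1\leftrightarrow fT$, $f_1\leftrightarrow eT^{-1}$, $h_1\leftrightarrow c-h$. Organized this way, the three asserted identities $\rho(f/T^i)=e/T^{i+1}$, $\rho(e/T^i)=f/T^{i-1}$, $\rho(h/T^i)=-h/T^i$ get proved simultaneously by induction on $i\geq 1$.

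For the base case $i=1$ I would argue directly: $e/T=f_2$, so $\rho(e/T)=\rho(f_2)=f_1=f=f/T^0$; next $h/T=[f_2,f_1]$ (no central term occurs since the total $T$-degree $-1$ is nonzero), so $\rho(h/T)=[\rho f_2,\rho f_1]=[f_1,f_2]=[f,e/T]=-h/T$; and $f/T=-\tfrac12[h/T,f_1]$, so $\rho(f/T)=-\tfrac12[-h/T,f_2]=-\tfrac12[-h/T,e/T]=e/T^2$. For the inductive step from $i$ to $i+1$ I would lower the $T$-power by bracketing with $h/T$, whose $\rho$-image is already known from the case $i=1$: from $f/T^{i+1}=-\tfrac12[h/T,f/T^i]$ and $e/T^{i+1}=\tfrac12[h/T,e/T^i]$ one recovers $\rho(f/T^{i+1})$ and $\rho(e/T^{i+1})$ using the inductive hypotheses on $f/T^i$ and $e/T^i$, and from $h/T^{i+1}=[e/T^i,f/T]$ one recovers $\rho(h/T^{i+1})$.

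The point that makes the scheme go through cleanly is that every Lie bracket appearing in it has nonzero total $T$-degree, so the central term $i\langle a,b\rangle\delta_{i+j,0}c$ of the affine bracket never contributes; on the elements in question $\rho$ therefore behaves like an ordinary automorphism of the loop algebra $\mathfrak{sl}_2\big[T,T^{-1}\big]$. Consequently the only real obstacle is bookkeeping: keeping straight which generator is $f_1$ and which is $f_2$ after applying $\rho$, and carrying the signs produced by $[h,e]=2e$, $[h,f]=-2f$, $[e,f]=h$ through the iterated brackets. (Conceptually the answer is that $\rho$, restricted away from $T$-grading degree $0$, is the composition of the Chevalley involution $e\mapsto f$, $f\mapsto e$, $h\mapsto-h$ of $\mathfrak{sl}_2$ with the grading shift $\mathrm{Ad}\,T^{h/2}$; but I would not need to make that statement precise on the central extension, since the induction above is self-contained.)
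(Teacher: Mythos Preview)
Your proposal is correct and follows essentially the same approach as the paper's proof: both argue by induction on $i$, expressing the loop elements as iterated brackets of the Chevalley generators (in particular using that bracketing with $h/T=[f_2,f_1]$ lowers the $T$-degree by one) and applying the homomorphism property of $\rho$. Your write-up is somewhat more systematic in that you spell out all three cases and the observation about the vanishing central contribution, whereas the paper treats $f/T^i$ explicitly and leaves $e/T^i$, $h/T^i$ to ``similarly''.
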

\begin{proof}
We have
\begin{gather*}
\frac{f}{T} = \frac12 \left[f, \frac hT \right] = \frac 12 \left[ f, \left[\frac eT, f \right] \right] \overset{\delta}{\longrightarrow} \frac 12 \left[ \frac eT, \left[f, \frac eT \right] \right] = \frac 12 \left[ \frac eT, - \frac hT \right] = \frac{e}{T^2}, \nonumber \\
\frac{f}{T^i} = \frac 12 \left[ \frac{f}{T^{i-1}}, \left[\frac{e}{T}, f\right] \right] \overset{\delta}{\longrightarrow} \frac 12 \left[ \frac{e}{T^i}, \left[f, \frac eT \right] \right] = \frac 12 \left[ \frac{e}{T^i}, - \frac hT \right] = \frac{e}{T^{i+1}}. \nonumber
\end{gather*}
Similarly we prove that $\rho\big(\frac{e}{T^i}\big) = \frac{f}{T^{i-1}}$, $\rho \big(\frac{h}{T^i}\big) = -\frac{h}{T^i}$.
\end{proof}

For $m\in{\mathbb C}$, let $\sigma_m \colon \widehat{{\mathfrak{sl}_2}} \to \operatorname{End}( V(m,k-m))$ be the Verma module structure. Let $\sigma_m\circ \rho \colon \widehat{{\mathfrak{sl}_2}} \to \operatorname{End}( V(m,k-m))$ be the twisted module structure.

Clearly the $\widehat{{\mathfrak{sl}_2}}$-modules $(\sigma_m\circ \rho, V(m,k-m))$ and $(\sigma_{m-k}, V(m-k,m))$ are isomorphic. If $v_m\in V(m,k-m)$ and $v_{k-m}\in V(k-m,m)$ are generating vectors, then an isomorphism $\chi\colon (\sigma_m\circ \rho, V(m,k-m)) \to (\sigma_{m-k}, V(m-k,m))$ is defined by the formula,
\begin{gather*}
f_{i_l}\cdots f_{i_1}v_{k-m} \mapsto f_{3-i_l}\cdots f_{3-i_1}v_{m},
\end{gather*}
for any $i_1,\dots,i_l\in\{1,2\}$. The isomorphism $\chi$ restricts to isomorphisms of the graded components, $V(k-m,m)_{(p_1,p_2)} \to V(m,k-m)_{(p_2,p_1)}$.

In Section \ref{BASIS} we fixed bases of the homogeneous components $V_{(p_1,p_2)}$ with $p_1\ne p_2$ of any Verma module $V$. By Lemma~\ref{lem elts}, under the isomorphism $\chi$ the chosen basis of $V(k-m, m)_{(p_1,p_2)}$ is mapped to the chosen basis of $V(m,k-m)_{(p_2,p_1)}$ up to multiplication of the basis vectors by~$\pm 1$. This~$\pm 1$ appears due to the formula $\rho\big(\frac h{T^i}\big) = - \frac h{T^i}$. In particular, we have
\begin{gather*}
\chi \colon \ \frac f{T^i}v_{k-m} \mapsto \frac e{T^{i+1}}v_{m}, \qquad
\frac f{T^i} \frac f{T^j} v_{k-m} \mapsto \frac e{T^{i+1}}\frac e{T^{j+1}}v_{m}.
\end{gather*}

Let $ \sigma^*_m \colon \widehat{{\mathfrak{sl}_2}} \to \operatorname{End}( V(m,k-m)^*)$ be the contragradient Verma module structure. Let $ \sigma^*_m\circ \rho \colon \widehat{{\mathfrak{sl}_2}} \to \operatorname{End}( V(m,k-m)^*)$ be the twisted module structure. The isomorphism $\chi$ induces an isomorphism of modules $\chi^* \colon (\sigma^*_m\circ \rho, V(m,k-m)^*) \to (\sigma^*_{m-k}, V(m-k,m)^*)$.

In Section~\ref{BASIS} we fixed bases in the homogeneous components $V^*_{(p_1,p_2)}$ with $p_1\ne p_2$ of any contragradient Verma module~$V^*$. Under the isomorphism $\chi^*$, the chosen basis of $V(k-m, m)^*_{(p_1,p_2)}$ is mapped to the chosen basis of $V(m,k-m)^*_{(p_2,p_1)}$ up to multiplication of the basis vectors by~$\pm 1$. In particular, we have
\begin{gather*}
 \chi^* \colon \ \left(\frac f{T^i}v_{k-m}\right)^* \mapsto \left(\frac e{T^{i+1}}v_{m}\right)^*, \qquad
\left(\frac f{T^i} \frac f{T^j} v_{k-m}\right)^* \mapsto \left(\frac e{T^{i+1}}\frac e{T^{j+1}}v_{m}\right)^*.
\end{gather*}

Assume that the relation in formula (\ref{id A}) holds in every contragradient Verma module $V^*$. Then in $V(k-m, m)^*$ it takes the form
\begin{gather*}
\frac{f}{T^{a-1}} (v_{k-m})^* = (-m-2+ a(k+2))\left(\frac{f}{T^{a-1}}v_{k-m}\right)^* \nonumber \\
\hphantom{\frac{f}{T^{a-1}} (v_{k-m})^* =}{} + \sum_{\ell=1}^{a-1} \bigg[\frac{h}{T^\ell}\left(\frac{f}{T^{a-1-\ell}}v_{k-m}\right)^* + 2 \frac{e}{T^\ell}\mathop{\sum_{i+j=a-1-\ell}}_{i\geq j\geq 0}
\left(\frac{f}{T^i}\frac{f}{T^j}v_{k-m}\right)^*\bigg].
\end{gather*}
The isomorphism $\chi^*$ sends this relation to the relation in $V(m,k-m)^*$,
\begin{gather*}
\frac{e}{T^{a}} (v_{m})^* = (-m-2+ a(k+2))\left(\frac{e}{T^{a}}v_{m}\right)^* \nonumber\\
\hphantom{\frac{e}{T^{a}} (v_{m})^* =}{} + \sum_{\ell=1}^{a-1} \bigg[{-}\frac{h}{T^\ell}\left(\frac{e}{T^{a-\ell}}v_{m}\right)^* + 2 \frac{f}{T^{\ell-1}}\mathop{\sum_{i+j=a-1-\ell}}_{i\geq j\geq 0}
\left(\frac{e}{T^{i+1}}\frac{e}{T^{j+1}}v_{m}\right)^*\bigg],
\end{gather*}
which is exactly the relation in formula~(\ref{id B}). Thus formula (\ref{id A}) implies formula~(\ref{id B}).

\subsection{Auxiliary lemma}\label{subsec aux lemma}
Let
\begin{gather*}
V=V(m,k-m) \qquad \text{and} \qquad V^* = V(m,k-m)^*.
\end{gather*}

\begin{lem} \label{lem pere} For $x\in V$, $\phi\in V^*$, $k\in{\mathbb Z}_{\geq 0}$, we have
\begin{gather*}
\left\langle \frac{f}{T^k} \varphi, x \right\rangle = \big\langle \varphi, eT^k x \big\rangle,
 \qquad \left\langle \frac{e}{T^k} \varphi, x \right\rangle = \big\langle \varphi, fT^k x \big\rangle, \qquad
\left\langle \frac{h}{T^k} \varphi, x \right\rangle = \big\langle \varphi, hT^k x \big\rangle. 
\end{gather*}
\end{lem}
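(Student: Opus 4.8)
The plan is to reduce all three identities to a single structural fact: the contragradient $\widehat{{\mathfrak{sl}_2}}$-action on $V^*$ is obtained from the Verma action on $V$ by transport through the Chevalley anti-involution of $\widehat{{\mathfrak{sl}_2}}$. So first I would introduce the anti-involution $\omega$ of $\widehat{{\mathfrak{sl}_2}}$, i.e.\ the unique linear map with $\omega([X,Y])=[\omega(Y),\omega(X)]$, $\omega^2=\operatorname{id}$, and $\omega(e_i)=f_i$, $\omega(f_i)=e_i$ for $i=1,2$; it exists because $\widehat{{\mathfrak{sl}_2}}$ is the Kac--Moody algebra attached to the given Cartan matrix. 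The statement will follow once I know (i) that $\langle g\varphi,x\rangle=\langle\varphi,\omega(g)x\rangle$ for every $g\in\widehat{{\mathfrak{sl}_2}}$, $\varphi\in V^*$, $x\in V$, and (ii) the values $\omega\big(\frac{f}{T^k}\big)$, $\omega\big(\frac{e}{T^k}\big)$, $\omega\big(\frac{h}{T^k}\big)$.

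For (ii) I would check that $\omega$ is given by the closed formula $\omega(xT^j)=x^{\mathrm t}T^{-j}$, $\omega(c)=c$, where $x\mapsto x^{\mathrm t}$ is matrix transposition on $\mathfrak{sl}_2$ (so $e\leftrightarrow f$, $h\mapsto h$, and $\langle x^{\mathrm t},y^{\mathrm t}\rangle=\operatorname{tr}(x^{\mathrm t}y^{\mathrm t})=\operatorname{tr}(xy)=\langle x,y\rangle$). The map so defined is clearly involutive and agrees with $\omega$ on the Chevalley generators $e_1=e$, $f_1=f$, $e_2=fT$, $f_2=eT^{-1}$; and it is an anti-automorphism of $\widehat{{\mathfrak{sl}_2}}$ because $[a,b]^{\mathrm t}=[b^{\mathrm t},a^{\mathrm t}]$ (as $(pq)^{\mathrm t}=q^{\mathrm t}p^{\mathrm t}$) and the central cocycle term $i\langle a,b\rangle\delta_{i+j,0}c$ is reproduced, using transposition-invariance of $\langle\cdot,\cdot\rangle$ and the fact that $-j=i$ when $i+j=0$. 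Since $e_1,f_1,e_2,f_2$ generate $\widehat{{\mathfrak{sl}_2}}$ as a Lie algebra (note $c=h_1+h_2$), this closed formula is $\omega$. In particular, for $k\in{\mathbb Z}_{\geq 0}$, $\omega\big(\frac{f}{T^k}\big)=eT^k$, $\omega\big(\frac{e}{T^k}\big)=fT^k$, $\omega\big(\frac{h}{T^k}\big)=hT^k$.

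For (i), writing $\sigma$ for the action of $\widehat{{\mathfrak{sl}_2}}$ on $V$, I would define $\sigma'(g)\in\operatorname{End}(V^*)$ by $\langle\sigma'(g)\varphi,x\rangle:=\langle\varphi,\sigma(\omega(g))x\rangle$; since $\omega$ reverses the ${\mathbb Z}^2$-grading of $\widehat{{\mathfrak{sl}_2}}$ and each graded component of $V$ is finite dimensional, $\sigma'(g)$ maps $V^*=\oplus_\gamma V_\gamma^*$ into itself. A direct computation using only that $\omega$ is an anti-homomorphism shows $\sigma'$ is a Lie algebra homomorphism $\widehat{{\mathfrak{sl}_2}}\to\operatorname{End}(V^*)$, and on the generators $\langle\sigma'(f_i)\varphi,x\rangle=\langle\varphi,e_ix\rangle=\langle f_i\varphi,x\rangle$ and $\langle\sigma'(e_i)\varphi,x\rangle=\langle\varphi,f_ix\rangle=\langle e_i\varphi,x\rangle$, so $\sigma'$ coincides with the contragradient action on a generating set, hence everywhere. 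Combining this with (ii) gives $\langle\frac{f}{T^k}\varphi,x\rangle=\langle\varphi,eT^k x\rangle$ and likewise the other two identities. The only step that is not purely formal is verifying that the explicit formula for $\omega$ respects the central term of the bracket, so that is where I would be most careful; the rest is bookkeeping with the generators of $\widehat{{\mathfrak{sl}_2}}$ and the defining property of $V^*$.
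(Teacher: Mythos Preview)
Your proof is correct, but it follows a more conceptual route than the paper. The paper argues by direct induction on $k$: it writes $\frac{f}{T^k}=\frac12\big[f_1,\big[f_2,\frac{f}{T^{k-1}}\big]\big]$ and observes that the defining relations $\langle f_i\varphi,x\rangle=\langle\varphi,e_ix\rangle$ convert this nested commutator into $\frac12\big[[eT^{k-1},e_2],e_1\big]=eT^k$ on the other side of the pairing; the identities for $\frac{e}{T^k}$ and $\frac{h}{T^k}$ are handled in the same way. You instead package the underlying mechanism once and for all by naming the Chevalley anti-involution $\omega$, exhibiting the closed formula $\omega(xT^j)=x^{\mathrm t}T^{-j}$, $\omega(c)=c$, and checking it is an anti-automorphism compatible with the central cocycle; then the contragradient action is identified with $\omega$-transport, and the three identities drop out simultaneously from $\omega\big(\frac{f}{T^k}\big)=eT^k$, $\omega\big(\frac{e}{T^k}\big)=fT^k$, $\omega\big(\frac{h}{T^k}\big)=hT^k$. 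Your approach is cleaner and more uniform; the paper's is more elementary in that it never needs to verify the cocycle compatibility, since it only ever uses the pairing identity on the Chevalley generators and iterated brackets thereof.
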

\begin{proof}The proof is by induction. We prove the first equality, the others are proved similarly.

We have $[f_2, f_1] = \frac{h}{T}$, hence $[f_1, [f_2, f_1]] = \frac{2f}{T}$. Similarly
 $[e_1, [e_2, e_1]] = 2e T$. So for $k=1$, we have
\begin{gather*}
\left\langle \frac{f}{T} \varphi, x \right\rangle = \left\langle \frac{1}{2} [f_1, [f_2, f_1]] \varphi, x \right\rangle = \left\langle \varphi, \frac{1}{2} [[e_1, e_2], e_1] x \right\rangle = \langle \varphi, eT x \rangle.
\end{gather*}
We have $\big[f_2, \frac{f}{T^{k-1}}\big] = \frac{h}{T^k}$, hence $\big[f_1, \big[f_2, \frac{f}{T^{k-1}} \big]\big] = \frac{2f}{T^k}$. Similarly, $\big[\big[eT^{k-1},fT\big], e\big] = 2e T^k$. Then
\begin{gather*}
\left\langle \frac{f}{T^k} \varphi, x \right\rangle = \left\langle \frac 12 \left[f_1, \left[f_2, \frac{f}{T^{k-1}}\right]\right] \varphi, x \right\rangle = \big\langle \varphi, \big[\big[eT^{k-1},e_2\big], e_1\big] x \big\rangle = \big\langle \varphi, eT^k x \big\rangle.\tag*{\qed}
\end{gather*}\renewcommand{\qed}{}
\end{proof}

\subsection{The structure of the proof of formula (\ref{id A})}\label{subsec structure} We reformulate formula (\ref{id A}) as
\begin{gather}
(m+(a-1)(k+2))\left(\frac{f}{T^{a-1}}v\right)^* \nonumber\\
\qquad{} =\frac{f}{T^{a-1}} (v)^* - \sum_{\ell=1}^{a-1}\bigg[\frac{h}{T^\ell}\left(\frac{f}{T^{a-1-\ell}}v\right)^* + 2 \frac{e}{T^\ell}\mathop{\sum_{i+j=a-1-\ell}}_{i\geq j\geq 0} \left(\frac{f}{T^i}\frac{f}{T^j}v\right)^*\bigg], \label{AA}
\end{gather}
and will prove it in this form.

Each term in (\ref{AA}) is an element of the homogeneous component $V^*_{(a,a-1)}$. In Section~\ref{BASIS} we specified a basis of the dual component $V_{(a,a-1)}$. We will calculate the value of the right-hand side in~(\ref{AA}) on an arbitrary basis vector and will obtain the value of the left-hand side on that vector.

The basis in $V_{(a,a-1)}$ consists of the vectors
\begin{gather*}
\frac{f}{T^{i_1}}\cdots\frac{f}{T^{i_r}} \frac{h}{T^{j_1}}\cdots\frac{h}{T^{j_s}} \frac{e}{T^{l_1}}\cdots\frac{e}{T^{l_{r-1}}}v, 
\end{gather*}
where
\begin{gather*}
0\leq i_r\leq i_{r-1}\leq\dots\leq i_1, \qquad 1\leq j_s\leq j_{s-1}\leq\dots\leq j_1,\qquad 1\leq l_{r-1}\leq l_{r-2}\leq\dots\leq l_1;\nonumber\\
 \sum_{u=1}^r i_u + \sum_{u=1}^s j_u + \sum_{u=1}^{r-1} l_u = a-1. 
\end{gather*}
We partition the basis in four groups. Group~${\rm O}$ consists of the single basis vector $\frac{f}{T^{a-1}}v$. Group~${\rm I}$ consists of all basis vectors with $r=1$, but different from $\frac{f}{T^{a-1}}v$. Group ${\rm II}$ consists of all basis vectors with $r=2$. Group ${\rm III}$ consists of all basis vectors with $r\geq 3$.

Notice that the value of the left-hand side of~(\ref{AA}) on the basis vector~$\frac{f}{T^{a-1}}v$ equals $m + (a-1)(k+2)$. Hence we need to show that the value of the right-hand side on the basis vector $\frac{f}{T^{a-1}}v$ equals $m + (a-1)(k+2)$. Similarly the value of the left-hand side on any basis vector of Groups~\mbox{I--III} equals zero. Hence we need
to prove that the value of the right-hand side on any basis vector of Groups~I--III equals zero. These four statements are the content of Propositions~\ref{Pr4},~\ref{Pr3},~\ref{Pr2}, and~\ref{Pr1} below. These propositions prove Theorem~\ref{thm form}.

\subsection[Group ${\rm O}$]{Group $\boldsymbol{{\rm O}}$}\label{subsec group O}
\begin{prop}\label{Pr4}The value of the right-hand side of \eqref{AA} on the basis vector $\frac{f}{T^{a-1}}v$ equals $m + (a-1)(k+2)$.
\end{prop}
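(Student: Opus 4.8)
The plan is to pair both sides of \eqref{AA} with the unique basis vector $x=\frac{f}{T^{a-1}}v$ of $V_{(a,a-1)}$ (the single element of Group ${\rm O}$) and to evaluate every term by moving the $\widehat{\mathfrak{sl}_2}$-action off the covectors using Lemma \ref{lem pere}. That lemma gives $\langle\frac{f}{T^{a-1}}(v)^*,x\rangle=\langle(v)^*,eT^{a-1}x\rangle$, $\langle\frac{h}{T^\ell}(\frac{f}{T^{a-1-\ell}}v)^*,x\rangle=\langle(\frac{f}{T^{a-1-\ell}}v)^*,hT^\ell x\rangle$, and $\langle\frac{e}{T^\ell}(\frac{f}{T^i}\frac{f}{T^j}v)^*,x\rangle=\langle(\frac{f}{T^i}\frac{f}{T^j}v)^*,fT^\ell x\rangle$. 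Since every covector occurring here is the dual of a fixed basis vector, the proposition reduces to identifying the three vectors $eT^{a-1}x$, $hT^\ell x$, $fT^\ell x$ in $V$ and reading off their coefficients in the chosen basis of the relevant homogeneous component.

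Next I would carry out these one-line commutator computations using $[aT^i,bT^j]=[a,b]T^{i+j}+i\langle a,b\rangle\delta_{i+j,0}c$ together with $\langle e,f\rangle=1$, $\langle h,f\rangle=\langle f,f\rangle=0$, the relations $hv=mv$, $cv=kv$, and the fact that $v$ is annihilated by $\hat{\mathfrak{n}}_+$ — hence by $eT^j$ for $j\geq0$ and by $hT^j$, $fT^j$ for $j\geq1$, as all of these lie in the subalgebra generated by $e_1=e$ and $e_2=fT$. This yields $eT^{a-1}x=(m+(a-1)k)v$, the summand $(a-1)k$ coming precisely from the central term in $[eT^{a-1},fT^{-(a-1)}]=h+(a-1)c$; it yields $hT^\ell x=-2\,\frac{f}{T^{a-1-\ell}}v$ for $1\leq\ell\leq a-1$; and $fT^\ell x=0$ for $\ell\geq1$, since $\langle f,f\rangle=0$ kills the would-be central term and $fT^\ell v=0$.

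Substituting back, the value of the right-hand side of \eqref{AA} on $x$ equals $(m+(a-1)k)-\sum_{\ell=1}^{a-1}\bigl[(-2)+0\bigr]=m+(a-1)k+2(a-1)=m+(a-1)(k+2)$, which is exactly the asserted value; the case $a=1$ has an empty sum and reduces to $\langle f(v)^*,fv\rangle=\langle(v)^*,efv\rangle=m$.

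The argument is short and, once Lemma \ref{lem pere} is available, essentially routine; the only delicate points are bookkeeping ones — recognizing which shifted generators $xT^j$ lie in $\hat{\mathfrak{n}}_+$ and therefore annihilate $v$, and retaining the central-charge contribution in $[eT^{a-1},fT^{-(a-1)}]$, since it is exactly what promotes the naive total $m+2(a-1)$ to $m+(a-1)(k+2)$.
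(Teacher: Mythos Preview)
Your proof is correct and follows essentially the same route as the paper's: both arguments invoke Lemma~\ref{lem pere} to transfer the actions of $\frac{f}{T^{a-1}}$, $\frac{h}{T^\ell}$, $\frac{e}{T^\ell}$ on the covectors to the actions of $eT^{a-1}$, $hT^\ell$, $fT^\ell$ on $x=\frac{f}{T^{a-1}}v$, then compute $eT^{a-1}x=(m+(a-1)k)v$, $hT^\ell x=-2\frac{f}{T^{a-1-\ell}}v$, and $fT^\ell x=0$, and sum to obtain $m+(a-1)(k+2)$. The only cosmetic difference is that you spell out explicitly why the central terms vanish in the $h$- and $f$-commutators (via $\langle h,f\rangle=\langle f,f\rangle=0$) and isolate the $a=1$ case, neither of which the paper bothers to mention.
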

\begin{proof}By Lemma \ref{lem pere} we have{\samepage
\begin{gather*}
\left\langle \frac{f}{T^{a-1}} (v)^*, \frac{f}{T^{a-1}} v \right\rangle = \left\langle (v)^*, eT^{a-1} \frac{f}{T^{a-1}} v \right\rangle \\
\hphantom{\left\langle \frac{f}{T^{a-1}} (v)^*, \frac{f}{T^{a-1}} v \right\rangle} = \left\langle (v)^*, \left[ h+(a-1)c + \frac{f}{T^{a-1}} eT^{a-1} \right] v \right\rangle = m + (a-1)k,
\end{gather*}
since $eT^{a-1} v$ is of degree $(-a, -a+1)$, hence zero.}

By Lemma \ref{lem pere}, for $ \ell \in \{ 1, \dots, a-1 \}$ we have
\begin{gather*}
 \bigg\langle \frac{h}{T^{\ell}} \left( \frac{f}{T^{a-1-\ell}} v \right)^*, \frac{f}{T^{a-1}} v \bigg\rangle
 = \bigg\langle \left( \frac{f}{T^{a-1-\ell}} v \right)^*, hT^{\ell} \frac{f}{T^{a-1}} v \bigg\rangle \\
 \hphantom{\bigg\langle \frac{h}{T^{\ell}} \left( \frac{f}{T^{a-1-\ell}} v \right)^*, \frac{f}{T^{a-1}} v \bigg\rangle}{} = \bigg\langle \left( \frac{f}{T^{a-1-\ell}} v \right)^*, -2 \frac{f}{T^{a-1-\ell}} v \bigg\rangle = -2.
\end{gather*}
By Lemma \ref{lem pere} for $ \ell \in \{ 1, \dots, a-1 \}$ we have
\begin{gather*}
\bigg\langle \frac{e}{T^{\ell}} \mathop{\sum_{i+j=a-1-\ell}}_{i\geq j\geq 0} \left(\frac{f}{T^i} \frac{f}{T^j}v \right)^*, \frac{f}{T^{a-1}} v \bigg\rangle = \bigg\langle \mathop{\sum_{i+j=a-1-\ell}}_{i\geq j\geq 0} \left(\frac{f}{T^i} \frac{f}{T^j}v \right)^*, fT^{\ell} \frac{f}{T^{a-1}} v \bigg\rangle \\
\hphantom{\bigg\langle \frac{e}{T^{\ell}} \mathop{\sum_{i+j=a-1-\ell}}_{i\geq j\geq 0} \left(\frac{f}{T^i} \frac{f}{T^j}v \right)^*, \frac{f}{T^{a-1}} v \bigg\rangle} = \bigg\langle \mathop{\sum_{i+j=a-1-\ell}}_{i\geq j\geq 0} \left(\frac{f}{T^i} \frac{f}{T^j}v \right)^*, \frac{f}{T^{a-1}} fT^{\ell} v \bigg\rangle =0,
\end{gather*}
since $fT^{\ell} v$ is of degree $ (-\ell+1, -\ell) \leq (0,-1)$, hence zero. Therefore,
\begin{gather*}
\bigg\langle \frac{f}{T^{a-1}} (v)^* - \sum_{\ell=1}^{a-1} \bigg[\frac{h}{T^\ell}\left(\frac{f}{T^{a-1-\ell}}v\right)^* + 2\frac{e}{T^\ell} \mathop{\sum_{i+j=a-1-\ell}}_{i\geq j\geq 0} \left(\frac{f}{T^i}\frac{f}{T^j}v\right)^* \bigg], \frac{f}{T^{a-1}} v \bigg\rangle \\
\qquad{}= m + (a-1)k - (-2)(a-1) = m + (a-1)(k+2).
\end{gather*}
Proposition \ref{Pr4} is proved.
\end{proof}

\subsection{Group I}
\begin{prop}\label{Pr3}The value of the right-hand side of \eqref{AA} on any basis vector of Group ${\rm I}$ equals zero.
\end{prop}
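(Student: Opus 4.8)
The plan is to imitate the proof of Proposition~\ref{Pr4}, now in the presence of the Cartan factors. Since a Group~I vector has $r=1$, it involves no factors $\frac{e}{T^{l}}$ (their number being $r-1=0$), so it is of the form $w=\frac{f}{T^{i_1}}\frac{h}{T^{j_1}}\cdots\frac{h}{T^{j_s}}v$ with $s\geq 1$, $1\leq j_s\leq\dots\leq j_1$, and $i_1+j_1+\dots+j_s=a-1$; in particular $a\geq 2$. Put $J=j_1+\dots+j_s$, so $J\geq 1$ and $i_1=a-1-J\geq 0$. By Lemma~\ref{lem pere}, pairing the three groups of terms on the right-hand side of~\eqref{AA} with $w$ amounts to extracting the coefficient of $v$ in $eT^{a-1}w$, the coefficient of $\frac{f}{T^{a-1-\ell}}v$ in $hT^{\ell}w$, and the coefficient of $\frac{f}{T^{i}}\frac{f}{T^{j}}v$ in $fT^{\ell}w$ (for $1\leq\ell\leq a-1$, $i\geq j\geq 0$, $i+j=a-1-\ell$), and then verifying that these contributions cancel with the signs dictated by~\eqref{AA}.

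Each operator $eT^{a-1}$, $hT^{\ell}$, $fT^{\ell}$ is carried to the right through the PBW monomial defining $w$ by repeated use of $[xT^{p},yT^{q}]=[x,y]T^{p+q}+p\langle x,y\rangle\delta_{p+q,0}c$ (with $c$ acting by $k$), and any summand landing in a component $V_{\gamma}$ with $\gamma\notin\mathbb Z^{2}_{\geq 0}$ is discarded. The brackets needed are $[eT^{a-1},fT^{-i_1}]=hT^{J}$ (no central term, since $J\geq 1$), $[hT^{\ell},fT^{-i_1}]=-2fT^{\ell-i_1}$, $[fT^{\ell},fT^{-i_1}]=0$, $[hT^{p},hT^{-j_u}]=2pk\,\delta_{p,j_u}$, $[fT^{\ell},hT^{-j_u}]=2fT^{\ell-j_u}$, together with the analogous ones needed when a resulting vector is re-expanded in the PBW basis. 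For instance, $eT^{a-1}$ annihilates $\frac{h}{T^{j_1}}\cdots\frac{h}{T^{j_s}}v$ on degree grounds, so $eT^{a-1}w=hT^{J}\frac{h}{T^{j_1}}\cdots\frac{h}{T^{j_s}}v$; all brackets of $hT^{J}$ with the $\frac{h}{T^{j_u}}$ are central, they vanish unless some $j_u=J$ (which forces $s=1$), and $hT^{J}v=0$, so $eT^{a-1}w=2j_1k\,v$ if $s=1$ and $eT^{a-1}w=0$ if $s\geq 2$. The computations for $hT^{\ell}w$ and $fT^{\ell}w$ are longer but of the same nature; only finitely many $\ell$ contribute.

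It is cleanest to treat $s=1$ and $s\geq 2$ separately. When $s=1$, the contributions carrying a factor $k$ cancel among themselves (the coefficient of $v$ in $eT^{a-1}w$ against the $\ell=j_1$ term from $hT^{\ell}w$), and the $k$-free contributions cancel among themselves (the second against the third group of terms of~\eqref{AA}). When $s\geq 2$, the coefficient of $v$ in $eT^{a-1}w$ vanishes outright and no $k$ survives anywhere, so what remains is a signed combinatorial identity --- in effect a binomial identity $\sum_{S}2^{|S|}(-2)^{s-|S|}=0$, the sum taken over subsets $S$ of the set of $h$-factors of $w$ compatible with the relevant degree constraint --- equating the second and third groups of terms. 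The only real difficulty is the bookkeeping: keeping track of which factors $\frac{h}{T^{j_u}}$ get absorbed into commutators when $fT^{\ell}$ or $hT^{\ell}$ is carried past them (including the secondary absorptions produced upon reordering into PBW form), enforcing the degree constraints so that only terms of degree $(a,a-1)$ survive, and grouping the sums over $\ell$ so that the two cancellations become transparent. No idea beyond Lemma~\ref{lem pere} and the PBW theorem is needed.
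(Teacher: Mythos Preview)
Your outline is correct and parallels the paper closely. The description of Group~I vectors, the reduction via Lemma~\ref{lem pere}, the computation of $eT^{a-1}w$ (yielding $2j_1k\,v$ when $s=1$ and $0$ when $s\geq 2$), and the case split $s=1$ versus $s\geq 2$ all match the paper's Lemmas~\ref{Group 3, s=1} and~\ref{lem G1s}. For $s=1$ your cancellation pattern ($k$-terms against $k$-terms, $k$-free terms against $k$-free terms) is exactly the paper's.

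For $s\geq 2$ your proposal is only a sketch, and it diverges from what the paper actually proves. You suggest that the $h$-sum and the $e$-sum cancel \emph{against each other} via ``in effect a binomial identity $\sum_{S}2^{|S|}(-2)^{s-|S|}=0$'' over subsets satisfying an unspecified degree constraint. The paper instead establishes the stronger fact that each of
\[
\sum_{\ell=1}^{a-1}\Big\langle \tfrac{h}{T^{\ell}}\big(\tfrac{f}{T^{a-1-\ell}}v\big)^*,\,w\Big\rangle
\qquad\text{and}\qquad
\sum_{\ell=1}^{a-1}\Big\langle \tfrac{e}{T^{\ell}}\!\!\sum_{\substack{i+j=a-1-\ell\\ i\geq j\geq 0}}\!\!\big(\tfrac{f}{T^{i}}\tfrac{f}{T^{j}}v\big)^*,\,w\Big\rangle
\]
vanishes \emph{separately}, and does so by induction on $s$: after moving $hT^{\ell}$ (respectively $fT^{\ell}$) past one factor $\frac{h}{T^{j_u}}$, the commutator term is a pairing of the same type but against a Group~I vector with $s-1$ Cartan factors, so the inductive hypothesis kills it; the remaining term is pushed further right until it annihilates $v$. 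The ``degree constraint'' you invoke is precisely what prevents a clean $(2-2)^{s}=0$ expansion --- the subset sum does not run over all $S$, and the secondary PBW reorderings you mention destroy the uniformity needed for that identity. So either the combinatorics must be carried out in full, or (more efficiently) replaced by the paper's induction on $s$.
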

\begin{proof}Group ${\rm I}$ consists of basis vectors of the form
\begin{gather*}
w = \frac{f}{T^{a-1-n}}\frac{h}{T^{j_1}} \cdots \frac{h}{T^{j_s}} v, \qquad \text{where} \quad
n \in \{1, \dots, a-1 \},\quad j_1 + \dots + j_s = n, \quad j_i \geq 1.
\end{gather*}

\begin{lem}\label{Group 3, s=1} In the notation above, if $s =1$, then
\begin{gather}
\bigg\langle \frac{f}{T^{a-1}} (v)^*, w \bigg\rangle = 2 n k, \nonumber\\ 
\bigg\langle \frac{h}{T^{\ell}} \left(\frac{f}{T^{a-1-\ell}}v\right)^*, w \bigg\rangle = \begin{cases}
 2 n k, & \text{if $\ell=n$,}\\
 - 4, & \text{if $\ell > a-1-n$,} \\
 0, & \text{if $\ell \leq a-1-n$,}
 \end{cases} \label{G1s12}
\\
\bigg\langle \frac{e}{T^{\ell}} \mathop{\sum_{i+j=a-1-\ell}}_{i\geq j\geq 0} \left(\frac{f}{T^i}\frac{f}{T^j}v\right)^*, w \bigg\rangle =
 \begin{cases}
 2, & \text{if $\ell \leq n$,}\\
 0, & \text{if $\ell > n$. }
 \end{cases} \nonumber 
\end{gather}
\end{lem}

Note that the first line in (\ref{G1s12}) is not mutually exclusive with the second and third lines in~(\ref{G1s12}).

\begin{proof}
We have $w = \frac{f}{T^{a-1-n}}\frac{h}{T^n} v $. Then
\begin{gather*}
\bigg\langle \frac{f}{T^{a-1}} (v)^*, \frac{f}{T^{a-1-n}}\frac{h}{T^n} v \bigg\rangle = \bigg\langle (v)^*, eT^{a-1} \frac{f}{T^{a-1-n}}\frac{h}{T^n} v \bigg\rangle \\
\hphantom{\bigg\langle \frac{f}{T^{a-1}} (v)^*, \frac{f}{T^{a-1-n}}\frac{h}{T^n} v \bigg\rangle }
= \bigg\langle (v)^*, \left[ hT^n + \frac{f}{T^{a-1-n}} eT^{a-1} \right] \frac{h}{T^n} v \bigg\rangle.
\end{gather*}
Note that $eT^{a-1} \frac{h}{T^n}v$ is of degree $(-a+n, -a+1+n) \leq (-1,0)$, so that $eT^{a-1} \frac{h}{T^n} v = 0$. Hence
\begin{gather*}
\bigg\langle (v)^*, hT^n \frac{h}{T^n} v \bigg\rangle = \bigg\langle (v)^*, \left[2nk + \frac{h}{T^n} hT^n\right] v \bigg\rangle = 2 n k.
\end{gather*}
We have
\begin{gather*}
\bigg\langle \frac{h}{T^{\ell}} \left(\frac{f}{T^{a-1-\ell}}v\right)^*,
\frac{f}{T^{a-1-n}}\frac{h}{T^n} v \bigg\rangle = \bigg\langle \left(\frac{f}{T^{a-1-\ell}}v\right)^*, hT^{\ell} \frac{f}{T^{a-1-n}}\frac{h}{T^n} v \bigg\rangle,\\
hT^{\ell} \frac{f}{T^{a-1-n}}\frac{h}{T^n} v = \left[ -2 fT^{\ell+n-a+1} + \frac{f}{T^{a-1-n}} hT^{\ell} \right] \frac{h}{T^n} v \\
\hphantom{hT^{\ell} \frac{f}{T^{a-1-n}}\frac{h}{T^n} v} = -2 fT^{\ell+n-a+1} \frac{h}{T^n} v + \frac{f}{T^{a-1-n}} hT^{\ell} \frac{h}{T^n} v.
\end{gather*}
Note that the second summand is nonzero if and only if $\ell=n$. In that case we have
\begin{gather*}
\bigg\langle \left(\frac{f}{T^{a-1-n}}v\right)^*, \frac{f}{T^{a-1-n}} hT^n \frac{h}{T^n} v \bigg\rangle = 2nk.
\end{gather*}
For the first summand, if $\ell+n-a+1 \leq 0$, then $-2 fT^{\ell+n-a+1} \frac{h}{T^n} v$ is a basis vector and so pairing with $\left(\frac{f}{T^{a-1-\ell}}v\right)^*$ gives zero. If $\ell+n-a+1 > 0$, then
\begin{gather*}
\bigg\langle \left(\frac{f}{T^{a-1-\ell}}v\right)^*, -2 fT^{\ell+n-a+1} \frac{h}{T^n} v \bigg\rangle \\
\qquad {} = \bigg\langle \left(\frac{f}{T^{a-1-\ell}}v\right)^*, -2 \left[ 2\frac{f}{T^{a-1-\ell}} + \frac{h}{T^n} fT^{\ell+n-a+1}\right] v \bigg\rangle = - 4,
\end{gather*}
where we used $fT^{\ell+n-a+1}v = 0$.

Finally,
\begin{gather*}
\bigg\langle \frac{e}{T^{\ell}} \mathop{\sum_{i+j=a-1-\ell}}_{i\geq j\geq 0} \left(\frac{f}{T^i}\frac{f}{T^j}v\right)^*, \frac{f}{T^{a-1-n}}\frac{h}{T^n} v \bigg\rangle = \bigg\langle \mathop{\sum_{i+j=a-1-\ell}}_{i\geq j\geq 0} \left(\frac{f}{T^i}\frac{f}{T^j}v\right)^*, fT^{\ell} \frac{f}{T^{a-1-n}}\frac{h}{T^n} v \bigg\rangle, \\
fT^{\ell} \frac{f}{T^{a-1-n}}\frac{h}{T^n} v = \frac{f}{T^{a-1-n}} fT^{\ell} \frac{h}{T^n} v = \frac{f}{T^{a-1-n}} \left[ 2fT^{\ell-n} + \frac{h}{T^n} fT^{\ell} \right] v = 2 \frac{f}{T^{a-1-n}} \frac{f}{T^{n-\ell}} v,
\end{gather*}
since $ fT^{\ell} v = 0$. Note that $(a-1-n)+(n-\ell)=a-1-\ell$, hence if $i = a-1-n$ and $j = n-\ell$ (or vice versa depending on what is greater) we have
\begin{gather*}
\bigg\langle \mathop{\sum_{i+j=a-1-\ell}}_{i\geq j\geq 0} \left(\frac{f}{T^i}\frac{f}{T^j}v\right)^*, 2 \frac{f}{T^{a-1-n}} \frac{f}{T^{n-\ell}} v \bigg\rangle = 2,
\end{gather*}
whenever $n-\ell \geq 0$ and zero otherwise. The lemma is proved.
\end{proof}

For $s=1$ Proposition \ref{Pr3} follows from Lemma~\ref{Group 3, s=1}:
\begin{gather*}
\bigg\langle \frac{f}{T^{a-1}} (v)^* - \sum_{\ell=1}^{a-1} \bigg[\frac{h}{T^{\ell}}\left(\frac{f}{T^{a-1-\ell}}v\right)^* + 2\frac{e}{T^{\ell}} \mathop{\sum_{i+j=a-1-\ell}}_{i\geq j\geq 0} \left(\frac{f}{T^i}\frac{f}{T^j}v\right)^*\bigg], \frac{f}{T^{a-1-n}}\frac{h}{T^n} v \bigg\rangle \\
\qquad = 2nk - 2nk + 4n - 2 \cdot 2n = 0.
\end{gather*}

\begin{lem}\label{lem G1s}
For $s \geq 2$, we have
\begin{gather}
\bigg\langle \frac{f}{T^{a-1}}(v) ^*, w \bigg\rangle =0, \label{G1sg1} \\
\sum_{\ell=1}^{a-1} \bigg\langle \frac{h}{T^{\ell}} \left(\frac{f}{T^{a-1-\ell}}v\right)^*, w \bigg\rangle =0, \label{G1sg2} \\
\sum_{\ell=1}^{a-1} \bigg\langle \frac{e}{T^{\ell}} \mathop{\sum_{i+j=a-1-\ell}}_{i\geq j\geq 0} \left(\frac{f}{T^i}\frac{f}{T^j}v\right)^*, w \bigg\rangle = 0. \label{G1sg3}
\end{gather}
\end{lem}

\begin{proof}Recall that $w=\frac{f}{T^{a-1-n}}\frac{h}{T^{j_1}} \cdots \frac{h}{T^{j_s}} v$ with $j_1+\cdots+j_s=n$. We have
\begin{gather*}
\bigg\langle \frac{f}{T^{a-1}} (v)^*, w \bigg\rangle = \big\langle (v)^*, eT^{a-1} w \big\rangle, \\
eT^{a-1} w = eT^{a-1} \frac{f}{T^{a-1-n}}\frac{h}{T^{j_1}} \cdots \frac{h}{T^{j_s}} v = \left[ hT^n + \frac{f}{T^{a-1-n}} eT^{a-1} \right] \frac{h}{T^{j_1}} \cdots \frac{h}{T^{j_s}} v.
\end{gather*}
We have $hT^n \frac{h}{T^{j_1}} \cdots \frac{h}{T^{j_s}} v=0$, since $hT^n$ commutes with all $\frac{h}{T^{j_i}}$. Indeed, we have $n > j_i$ since $j_1 + \dots + j_s = n$, $j_i \geq 1$, and $s \geq 2$.

We also have $\frac{f}{T^{a-1-n}} eT^{a-1} \frac{h}{T^{j_1}} \cdots \frac{h}{T^{j_s}} v=0$ since $eT^{a-1} \frac{h}{T^{j_1}} \cdots \frac{h}{T^{j_s}}v$ is of degree $(-a+n, -a+1+n) \leq (-1, 0)$, hence zero. This proves~(\ref{G1sg1}).

We prove (\ref{G1sg2}) by induction on $s$. For $s=2$ we have
\begin{gather*}
\bigg\langle \frac{h}{T^{\ell}} \left(\frac{f}{T^{a-1-\ell}}v\right)^*, w \bigg\rangle = \bigg\langle \left(\frac{f}{T^{a-1-\ell}}v\right)^*, hT^{\ell} w \bigg\rangle, \\
hT^{\ell} w = hT^{\ell} \frac{f}{T^{a-1-n}}\frac{h}{T^{j_1}} \frac{h}{T^{j_2}} v = \left[ -2 fT^{\ell-a+1+n} + \frac{f}{T^{a-1-n}} hT^{\ell} \right] \frac{h}{T^{j_1}} \frac{h}{T^{j_2}} v \\
\hphantom{hT^{\ell} w}{} = -2 fT^{\ell-a+1+n} \frac{h}{T^{j_1}} \frac{h}{T^{j_2}} v + \frac{f}{T^{a-1-n}} hT^{\ell} \frac{h}{T^{j_1}} \frac{h}{T^{j_2}} v.
\end{gather*}
Note that for $ \frac{f}{T^{a-1-n}} hT^{\ell} \frac{h}{T^{j_1}} \frac{h}{T^{j_2}} v $ to give a nonzero pairing with $\big(\frac{f}{T^{a-1-\ell}}v\big)^*$ we need $\ell=n$, which implies that $hT^{\ell}$ commutes with $\frac{h}{T^{j_1}}$ and $\frac{h}{T^{j_2}}$ ($\ell > j_i$ since $j_1+j_2 =n=\ell$ and $j_i \geq 1$), so that $ \frac{f}{T^{a-1-n}} hT^{\ell} \frac{h}{T^{j_1}} \frac{h}{T^{j_2}} v $ gives zero for all $\ell$.

Also note that whenever $\ell \leq a-1-n$, $fT^{\ell-a+1+n} \frac{h}{T^{j_1}} \frac{h}{T^{j_2}} v$ is a basis vector and so pairing with $\big(\frac{f}{T^{a-1-\ell}}v\big)^*$ gives zero. If $\ell > a-1-n$, then
\begin{gather}
-2 fT^{\ell-a+1+n} \frac{h}{T^{j_1}} \frac{h}{T^{j_2}} v = -2 \left[ 2 fT^{\ell-a+1+n-j_1} + \frac{h}{T^{j_1}} fT^{\ell-a+1+n} \right] \frac{h}{T^{j_2}} v \nonumber \\
\hphantom{-2 fT^{\ell-a+1+n} \frac{h}{T^{j_1}} \frac{h}{T^{j_2}} v}{} = -4 fT^{\ell-a+1+n-j_1} \frac{h}{T^{j_2}} v -2 \frac{h}{T^{j_1}} fT^{\ell-a+1+n} \frac{h}{T^{j_2}} v. \label{1 and 2}
\end{gather}

If $\ell \leq a-1-n+j_1$, the first summand gives zero when pairing with $\big(\frac{f}{T^{a-1-\ell}}v\big)^*$, since for such $\ell fT^{\ell-a+1+n-j_1} \frac{h}{T^{j_2}} v$ is a basis vector. For $\ell > a-1-n+j_1$, we have
\begin{gather*}
-4 fT^{\ell-a+1+n-j_1} \frac{h}{T^{j_2}} v = -4 \left[ 2 \frac{f}{T^{a-1-\ell}} v + \frac{h}{T^{j_2}} fT^{\ell-a+1+n-j_1} \right] v= -8 \frac{f}{T^{a-1-\ell}} v,
\end{gather*}
since $fT^{\ell-a+1+n-j_1}v$ is of degree $(-\ell+a-1-n + j_1 +1, -\ell+a-1-n+j_1) \leq (0, -1)$, hence must be equal to zero. So for $\ell \in \{a-1-n+j_1 +1, \dots, a-1\}$ we get
\begin{gather*}
\bigg\langle \left(\frac{f}{T^{a-1-\ell}}v\right)^*, -4 fT^{\ell-a+1+n-j_1} \frac{h}{T^{j_2}} v \bigg\rangle = -8
\end{gather*}
and zero for other values of $\ell$. The total number of elements in the set $\{a-1-n+j_1 +1, \dots, a-1\}$ equals $j_2$.

Whereas, for the second summand in (\ref{1 and 2}) we have
\begin{gather*}
-2 \frac{h}{T^{j_1}} fT^{\ell-a+1+n} \frac{h}{T^{j_2}} v = -2\frac{h}{T^{j_1}} \left[ 2 fT^{\ell-a+1+n - j_2} + \frac{h}{T^{j_2}} fT^{\ell-a+1+n} \right] v \\
\hphantom{-2 \frac{h}{T^{j_1}} fT^{\ell-a+1+n} \frac{h}{T^{j_2}} v } = -4 \frac{h}{T^{j_1}} fT^{\ell-a+1+n -j_2} v,
\end{gather*}
since $fT^{\ell-a+1+n} v$ is of degree $(-\ell+a-1-n+1, -\ell+a-1-n) \leq (0, -1)$, hence must be equal to zero.

If $\ell > a-1-n+j_2$, then $fT^{\ell-a+1+n-j_2} v$ is of degree $(-\ell+a-1-n+j_2 +1, -\ell+a-1-n+j_2) \leq (0, -1)$, hence must be equal to zero. If $\ell \leq a-1-n+j_2$, then
\begin{gather*}
-4 \frac{h}{T^{j_1}} fT^{\ell-a+1+n -j_2} v = -4 \left[ -2\frac{f}{T^{a-1-\ell}} + fT^{\ell-a+1+n - j_2} \frac{h}{T^{j_1}} \right] \\
\hphantom{-4 \frac{h}{T^{j_1}} fT^{\ell-a+1+n -j_2} v} = 8 \frac{f}{T^{a-1-\ell}} -4 fT^{\ell-a+1+n - j_2} \frac{h}{T^{j_1}}.
\end{gather*}
The second summand gives zero when pairing with $\big(\frac{f}{T^{a-1-\ell}}v\big)^*$. So for $\ell \in \{a-1-n+1, \dots, a-1-n+j_2 \}$ we get
\begin{gather*}
\bigg\langle \left(\frac{f}{T^{a-1-\ell}}v\right)^*, -2 \frac{h}{T^{j_1}} fT^{\ell-a+1+n} \frac{h}{T^{j_2}} v \bigg\rangle = 8
\end{gather*}
and zero for other values of $\ell$. The total number of elements in the set $\{a-1-n+1, \dots, a-1-n+j_2 \}$ equals $j_2$.

Therefore,
\begin{gather*}
\sum_{\ell=1}^{a-1} \bigg\langle \frac{h}{T^{\ell}} \left(\frac{f}{T^{a-1-\ell}}v\right)^*, \frac{f}{T^{a-1-n}} \frac{h}{T^{j_1}} \frac{h}{T^{j_2}} v \bigg\rangle = -8 j_2 + 8 j_2 = 0
\end{gather*}
and so for $s=2$ we proved (\ref{G1sg2}).

Now suppose that (\ref{G1sg2}) holds for all natural numbers up to $s$. Then
\begin{gather*}
hT^{\ell} \frac{f}{T^{a-1-n}} \frac{h}{T^{j_1}}\dots\frac{h}{T^{j_{s+1}}} v = \left[ -2 fT^{\ell-a+1+n} + \frac{f}{T^{a-1-n}} hT^{\ell} \right] \frac{h}{T^{j_1}}\cdots\frac{h}{T^{j_{s+1}}} v.
\end{gather*}
Note that for $\frac{f}{T^{a-1-n}} hT^{\ell} \frac{h}{T^{j_1}} \cdots \frac{h}{T^{j_{s+1}}} v$ to give a nonzero pairing with $\big(\frac{f}{T^{a-1-\ell}}v\big)^*$
we need $\ell=n$. That assumption implies that $hT^{\ell}$ commutes with $\frac{h}{T^{j_i}}$ for all $i \in \{1, \dots, s+1\}$ since $\ell > j_i$ as $j_1+\dots +j_{s+1} =n=\ell$ and $j_i \geq 1$. Hence $ \frac{f}{T^{a-1-n}} hT^{\ell} \frac{h}{T^{j_1}}\cdots \frac{h}{T^{j_{s+1}}} v $ gives zero for all $\ell$.

Also note that whenever $\ell \leq a-1-n$, the vector $fT^{\ell-a+1+n} \frac{h}{T^{j_1}} \cdots \frac{h}{T^{j_{s+1}}} v$ is a basis vector and so pairing with $\big(\frac{f}{T^{a-1-\ell}}v\big)^*$ gives zero.

If $\ell > a-1-n$, then
\begin{gather}
-2 fT^{\ell-a+1+n} \frac{h}{T^{j_1}}\cdots\frac{h}{T^{j_{s+1}}} v=-2 \left[ 2fT^{\ell-a+1+n - j_1} + \frac{h}{T^{j_1}} fT^{\ell-a+1+n}\right] \frac{h}{T^{j_2}}\cdots\frac{h}{T^{j_{s+1}}} v \nonumber \\
\qquad{} = -4 \frac{f}{T^{a-1 - (n-j_1) - \ell}}\frac{h}{T^{j_2}}\cdots\frac{h}{T^{j_{s+1}}}v - 2 \frac{h}{T^{j_1}} fT^{-a+1 + n + \ell} \frac{h}{T^{j_2}}\cdots\frac{h}{T^{j_{s+1}}}v. \label{fs sum}
\end{gather}

Note that by induction hypothesis we have
\begin{gather*}
0 = \sum_{\ell=1}^{a-1} \bigg\langle \frac{h}{T^{\ell}} \left(\frac{f}{T^{a-1-\ell}}v\right)^*, \frac{f}{T^{a-1-(n-j_1)}} \frac{h}{T^{j_2}}\cdots\frac{h}{T^{j_{s+1}}} v \bigg\rangle \nonumber \\
\hphantom{0} = \sum_{\ell=1}^{a-1} \bigg\langle \left(\frac{f}{T^{a-1-\ell}}v\right)^*, hT^{\ell} \frac{f}{T^{a-1-(n-j_1)}} \frac{h}{T^{j_2}}\cdots\frac{h}{T^{j_{s+1}}} v \bigg\rangle. 
\end{gather*}

So we add this zero term multiplied by $-2$ to the first summand in $(\ref{fs sum})$ to get
\begin{gather}
 -2 \sum_{\ell=1}^{a-1} \bigg\langle \left(\frac{f}{T^{a-1-\ell}}v\right)^*, 2\frac{f}{T^{a-1 - (n-j_1) -\ell}}\frac{h}{T^{j_2}}\cdots\frac{h}{T^{j_{s+1}}} v \bigg\rangle \nonumber \\
\qquad\quad{} -2 \sum_{\ell=1}^{a-1} \bigg\langle \left(\frac{f}{T^{a-1-\ell}}v\right)^*, hT^{\ell} \frac{f}{T^{a-1-(n-j_1)}} \frac{h}{T^{j_2}}\cdots\frac{h}{T^{j_{s+1}}} v \bigg\rangle \label{trick_notes} \\
\qquad{} = -2 \sum_{\ell=1}^{a-1} \bigg\langle \left(\frac{f}{T^{a-1-\ell}}v\right)^*, \left[ 2\frac{f}{T^{a-1 - (n-j_1) - \ell}} + hT^{\ell} \frac{f}{T^{a-1-(n-j_1)}} \right] \frac{h}{T^{j_2}}\dots\frac{h}{T^{j_{s+1}}} v \bigg\rangle \nonumber \\
\qquad{} = -2 \sum_{\ell=1}^{a-1} \bigg\langle \left(\frac{f}{T^{a-1-\ell}}v\right)^*, \frac{f}{T^{a-1-(n-j_1)}} hT^{\ell} \frac{h}{T^{j_2}}\cdots\frac{h}{T^{j_{s+1}}} v \bigg\rangle, \nonumber
\end{gather}
where in the last step we use commutation relations.

Note that for $ \frac{f}{T^{a-1-(n-j_1)}} hT^{\ell} \frac{h}{T^{j_2}} \cdots \frac{h}{T^{j_{s+1}}} v $ to give a nonzero pairing with $\big(\frac{f}{T^{a-1-\ell}}v\big)^*$ we need $\ell=n-j_1$. That assumption implies that $hT^{\ell}$ commutes with $\frac{h}{T^{j_i}}$ for all $i \in \{2, \dots, s+1\}$ since $\ell>j_i$ as $j_2+\dots+ j_{s+1} =n - j_1=\ell$ and $j_i \geq 1$. Hence $\frac{f}{T^{a-1-(n-j_1)}} hT^{\ell} \frac{h}{T^{j_2}} \cdots \frac{h}{T^{j_{s+1}}} v $ gives zero for all~$\ell$.

For the second summand in (\ref{fs sum}) we have
\begin{gather}
- 2 \sum_{\ell=1}^{a-1} \bigg\langle \left(\frac{f}{T^{a-1-\ell}}v\right)^*, \frac{h}{T^{j_1}} fT^{-a+1 + n + \ell} \frac{h}{T^{j_2}}\cdots\frac{h}{T^{j_{s+1}}}v \bigg\rangle
\nonumber \\
\qquad{} = - 2 \sum_{\ell=1}^{a-1} \bigg\langle \left(\frac{f}{T^{a-1-\ell}}v\right)^*, \frac{h}{T^{j_1}} \left[ 2 \frac{f}{T^{a-1 - (n-j_2) - \ell}} + \frac{h}{T^{j_2}} fT^{-a+1 + n +\ell} \right] \frac{h}{T^{j_3}} \cdots\frac{h}{T^{j_{s+1}}}v \bigg\rangle \nonumber \\
\qquad{} = - 4 \sum_{\ell=1}^{a-1} \bigg\langle \left(\frac{f}{T^{a-1-\ell}}v\right)^*, \frac{h}{T^{j_1}} \frac{f}{T^{a-1 - (n-j_2) - \ell}} \frac{h}{T^{j_3}} \cdots\frac{h}{T^{j_{s+1}}}v \bigg\rangle \label{ind1} \\
\qquad\quad{} - 2 \sum_{\ell=1}^{a-1} \bigg\langle \left(\frac{f}{T^{a-1-\ell}}v\right)^*, \frac{h}{T^{j_1}} \frac{h}{T^{j_2}} fT^{-a+1 + n +\ell} \frac{h}{T^{j_3}} \cdots\frac{h}{T^{j_{s+1}}}v \bigg\rangle. \label{ind2}
\end{gather}

In (\ref{ind1}) we note that
\begin{gather*}
\frac{h}{T^{j_1}} \frac{f}{T^{a-1 - (n-j_2) - \ell}} \frac{h}{T^{j_3}} \cdots\frac{h}{T^{j_{s+1}}}v \\
\qquad{} = \left[ -2 \frac{f}{T^{a-1 - (n-j_1-j_2) - \ell}} + \frac{f}{T^{a-1 - (n-j_2) - \ell}}\frac{h}{T^{j_1}} \right] \frac{h}{T^{j_3}} \cdots\frac{h}{T^{j_{s+1}}}v \\
\qquad{} = -2 \frac{f}{T^{a-1 - (n-j_1-j_2) - \ell}} \frac{h}{T^{j_3}} \cdots\frac{h}{T^{j_{s+1}}}v + \frac{f}{T^{a-1 - (n-j_2) - \ell}}\frac{h}{T^{j_1}} \frac{h}{T^{j_3}} \cdots\frac{h}{T^{j_{s+1}}}v.
\end{gather*}
Note that in both terms the number of $h$'s is less than or equal to $s$, so we use the exact same reasoning as in (\ref{trick_notes}) to show that
\begin{gather*}
\sum_{\ell=1}^{a-1} \bigg\langle \left(\frac{f}{T^{a-1-\ell}}v\right)^*, \frac{f}{T^{a-1 - (n-j_1-j_2) - \ell}} \frac{h}{T^{j_3}} \dots\frac{h}{T^{j_{s+1}}}v \bigg\rangle = 0, \\
\sum_{\ell=1}^{a-1} \bigg\langle \left(\frac{f}{T^{a-1-\ell}}v\right)^*, \frac{f}{T^{a-1 - (n-j_2) - \ell}}\frac{h}{T^{j_1}} \frac{h}{T^{j_3}} \cdots\frac{h}{T^{j_{s+1}}}v \bigg\rangle = 0,
\end{gather*}
which implies that the expression in (\ref{ind1}) equals zero. Similarly, one shows that in (\ref{ind2}),
\begin{gather*}
\sum_{\ell=1}^{a-1} \bigg\langle \left(\frac{f}{T^{a-1-\ell}}v\right)^*, \frac{h}{T^{j_1}} \frac{h}{T^{j_2}} fT^{-a+1 + n + \ell} \frac{h}{T^{j_3}} \cdots\frac{h}{T^{j_{s+1}}}v \bigg\rangle
\end{gather*}
the factor $fT^{-a+1 + n + \ell}$ can be pulled to the right by using the same argument (first commute $fT^{-a+1 + n + \ell}$ with $\frac{h}{T^{j_3}}$ and then pull $fT^{-a+1 + n-j_3 + \ell}$ to the left). Ultimately, we get
\begin{gather*}
\sum_{\ell=1}^{a-1} \bigg\langle \left(\frac{f}{T^{a-1-\ell}}v\right)^*, \frac{h}{T^{j_1}} \frac{h}{T^{j_2}} \frac{h}{T^{j_3}} \cdots\frac{h}{T^{j_{s+1}}} fT^{-a+1 + n + \ell} v \bigg\rangle = 0,
\end{gather*}
since $\ell > a-1-n$ and so $fT^{-a+1 + n + \ell} v = 0$. Therefore,
\begin{gather*}
\sum_{\ell=1}^{a-1} \bigg\langle \frac{h}{T^{\ell}} \left(\frac{f}{T^{a-1-\ell}}v\right)^*, \frac{f}{T^{a-1-n}} \frac{h}{T^{j_1}}\cdots\frac{h}{T^{j_{s}}} v \bigg\rangle = 0,
\end{gather*}
and formula (\ref{G1sg2}) is proved.

We prove formula (\ref{G1sg3}) by induction on $s$. For $s=2$, we have
\begin{gather*}
\bigg\langle \frac{e}{T^{\ell}} \mathop{\sum_{i+j=a-1-\ell}}_{i\geq j\geq 0} \left(\frac{f}{T^i}\frac{f}{T^j}v\right)^*, w \bigg\rangle = \bigg\langle \mathop{\sum_{i+j=a-1-\ell}}_{i\geq j\geq 0} \left(\frac{f}{T^i}\frac{f}{T^j}v\right)^*, fT^{\ell} w \bigg\rangle, \\
 fT^{\ell} w = fT^{\ell} \frac{f}{T^{a-1-n}}\frac{h}{T^{j_1}} \frac{h}{T^{j_2}} v = \frac{f}{T^{a-1-n}} fT^{\ell} \frac{h}{T^{j_1}} \frac{h}{T^{j_2}} v.
\end{gather*}
Note that $fT^{\ell} \frac{h}{T^{j_1}} \frac{h}{T^{j_2}} v$ is of degree $(n-\ell+1, n-\ell)$, hence nonzero only if $\ell \leq n$. For such $\ell$ we have
\begin{gather}
\frac{f}{T^{a-1-n}} \left[ 2fT^{\ell-j_1} + \frac{h}{T^{j_1}} ft^{\ell} \right] \frac{h}{T^{j_2}} v = 2 \frac{f}{T^{a-1-n}} fT^{\ell-j_1} \frac{h}{T^{j_2}} v + \frac{f}{T^{a-1-n}} \frac{h}{T^{j_1}} fT^{\ell} \frac{h}{T^{j_2}} v \nonumber \\
\hphantom{\frac{f}{T^{a-1-n}} \left[ 2fT^{\ell-j_1} + \frac{h}{T^{j_1}} ft^{\ell} \right] \frac{h}{T^{j_2}} v} = 2 \frac{f}{T^{a-1-n}} fT^{\ell-j_1} \frac{h}{T^{j_2}} v +2 \frac{f}{T^{a-1-n}} \frac{h}{T^{j_1}} fT^{\ell-j_2} v. \label{2terms}
\end{gather}
If $\ell \leq j_1$, then the first summand in~(\ref{2terms}) gives zero when pairing with any vector with two $f$'s. If $\ell > j_1$, then
\begin{gather*}
2 \frac{f}{T^{a-1-n}} fT^{\ell-j_1} \frac{h}{T^{j_2}} v = 2 \frac{f}{T^{a-1-n}} \left[ 2fT^{\ell-j_1-j_2} + \frac{h}{T^{j_2}}fT^{\ell-j_1} \right] v = 4 \frac{f}{T^{a-1-n}} \frac{f}{T^{n-\ell}} v.
\end{gather*}
If $\ell > j_2$, then the second summand in (\ref{2terms}) is zero simply because $fT^{\ell-j_2} v = 0$. If $\ell \leq j_2$, then
\begin{gather*}
2 \frac{f}{T^{a-1-n}} \frac{h}{T^{j_1}} fT^{\ell-j_2} v = 2 \frac{f}{T^{a-1-n}} \left[ -2 fT^{\ell-j_1-j_2} + fT^{\ell-j_2}\frac{h}{T^{j_1}} \right] v = -4 \frac{f}{T^{a-1-n}} \frac{f}{T^{n-\ell}} v,
\end{gather*}
since $\frac{f}{T^{a-1-n}} fT^{\ell-j_2}\frac{h}{T^{j_1}}v$ is a basis vector, hence pairing with a vector consisting of two $f$'s gives zero. Therefore,
\begin{gather}
\sum_{\ell=1}^{a-1} \bigg\langle \mathop{\sum_{i+j=a-1-\ell}}_{i\geq j\geq 0} \left(\frac{f}{T^i}\frac{f}{T^j}v\right)^*, fT^{\ell} w \bigg\rangle \nonumber \\
\qquad{} = \sum_{\ell=1}^{a-1} \bigg\langle \mathop{\sum_{i+j=a-1-\ell}}_{i\geq j\geq 0}
\left(\frac{f}{T^i}\frac{f}{T^j}v\right)^*, 4 \frac{f}{T^{a-1-n}} \frac{f}{T^{n-\ell}} v \bigg\rangle \label{sum1} \\
\qquad\quad{} + \sum_{\ell=1}^{a-1} \bigg\langle \mathop{\sum_{i+j=a-1-\ell}}_{i\geq j\geq 0} \left(\frac{f}{T^i}\frac{f}{T^j}v\right)^*, -4 \frac{f}{T^{a-1-n}} \frac{f}{T^{n-\ell}} v \bigg\rangle. \label{sum2}
\end{gather}
Note that in the expression in~(\ref{sum1}) for each $\ell \in \{j_1 +1, \dots, n \}$ there exists exactly one pair of indices $(i,j) = (\max \{a-1-n, n-\ell \} , \min \{a-1-n, n-\ell\})$ that gives $4$ when pairing. All other pairs $(i,j)$ give zero. Similarly, the expression in~(\ref{sum2}) equals $-4$ for each $\ell \in \{1, \dots, j_2 \}$ and exactly one corresponding pair $(i,j)$, and zero otherwise. Also note that the number of elements in each set $\{j_1 +1, \dots, n \}$ and $\{1, \dots, j_2 \}$ equals $j_2$. Hence we get
\begin{gather*}
4 j_2 - 4 j_2 = 0.
\end{gather*}
Therefore, formula (\ref{G1sg3}) is proved for $s=2$.

Now suppose that formula (\ref{G1sg3}) holds for all natural numbers up to $s$. Then
\begin{gather}
fT^{\ell} \frac{f}{T^{a-1-n}} \frac{h}{T^{j_1}} \cdots \frac{h}{T^{j_{s+1}}} v\ =\
 \frac{f}{T^{a-1-n}} fT^{\ell} \frac{h}{T^{j_1}} \cdots \frac{h}{T^{j_{s+1}}} v \nonumber \\
\qquad{} = \frac{f}{T^{a-1-n}} \left[ 2 fT^{\ell-j_1} + \frac{h}{T^{j_1}} fT^{\ell} \right] \frac{h}{T^{j_2}} \cdots \frac{h}{T^{j_{s+1}}} v \nonumber \\
\qquad{} = 2 \frac{f}{T^{a-1-n}} fT^{\ell-j_1} \frac{h}{T^{j_2}} \cdots \frac{h}{T^{j_{s+1}}} v + \frac{f}{T^{a-1-n}} \frac{h}{T^{j_1}} fT^{\ell} \frac{h}{T^{j_2}} \cdots \frac{h}{T^{j_{s+1}}} v. \label{fs sum2}
\end{gather}

Note that if $\ell \leq j_1$, then the first summand in (\ref{fs sum2}) is a basis vector and hence its pairing with a vector consisting of two $f$'s gives zero. If $\ell > j_1$ we have
\begin{gather*}
\sum_{\ell=1}^{a-1} \bigg\langle \mathop{\sum_{i+j=a-1-\ell}}_{i\geq j\geq 0} \left(\frac{f}{T^i}\frac{f}{T^j}v\right)^*, \frac{f}{T^{a-1-n}} fT^{\ell-j_1} \frac{h}{T^{j_2}} \cdots \frac{h}{T^{j_{s+1}}} v \bigg\rangle\\
\qquad{} = \sum_{\ell=j_1+1}^{a-1} \bigg\langle \mathop{\sum_{i+j=a-1-\ell}}_{i\geq j\geq 0} \left(\frac{f}{T^i}\frac{f}{T^j}v\right)^*, \frac{f}{T^{a-1-n}} fT^{\ell-j_1} \frac{h}{T^{j_2}} \cdots \frac{h}{T^{j_{s+1}}} v \bigg\rangle \\
\qquad{} = \sum_{\ell=j_1+1}^{a-1} \bigg\langle \frac{e}{T^{\ell-j_1}} \mathop{\sum_{i+j=a-1-\ell}}_{i\geq j\geq 0} \left(\frac{f}{T^i}\frac{f}{T^j}v\right)^*, \frac{f}{T^{a-1-n}} \frac{h}{T^{j_2}} \cdots \frac{h}{T^{j_{s+1}}} v \bigg\rangle \\
\qquad{} = \sum_{k=1}^{a-1-j_1} \bigg\langle \frac{e}{T^{k}} \mathop{\sum_{i+j=a-1-j_1 - k}}_{i\geq j\geq 0} \left(\frac{f}{T^i}\frac{f}{T^j}v\right)^*, \frac{f}{T^{(a-1-j_1)-(n-j_1)}} \frac{h}{T^{j_2}} \cdots \frac{h}{T^{j_{s+1}}} v \bigg\rangle = 0
\end{gather*}
by induction hypothesis. For the second summand in (\ref{fs sum2}) we have
\begin{gather}
\frac{f}{T^{a-1-n}} \frac{h}{T^{j_1}} fT^{\ell} \frac{h}{T^{j_2}} \cdots \frac{h}{T^{j_{s+1}}} v = \frac{f}{T^{a-1-n}} \frac{h}{T^{j_1}} \left[ 2fT^{\ell-j_2} + \frac{h}{T^{j_2}} fT^{\ell} \right] \frac{h}{T^{j_3}} \cdots \frac{h}{T^{j_{s+1}}} v \nonumber \\
\qquad{} = 2 \frac{f}{T^{a-1-n}} \frac{h}{T^{j_1}} fT^{\ell-j_2} \frac{h}{T^{j_3}} \cdots \frac{h}{T^{j_{s+1}}} v + \frac{f}{T^{a-1-n}} \frac{h}{T^{j_1}} \frac{h}{T^{j_2}} fT^{\ell} \frac{h}{T^{j_3}} \cdots \frac{h}{T^{j_{s+1}}} v. \label{2f}
\end{gather}
Note that
\begin{gather*}
\frac{f}{T^{a-1-n}} \frac{h}{T^{j_1}} fT^{\ell-j_2} \frac{h}{T^{j_3}} \cdots \frac{h}{T^{j_{s+1}}} v = \frac{f}{T^{a-1-n}} \left[ -2 fT^{\ell-j_1-j_2} + fT^{\ell-j_2} \frac{h}{T^{j_1}} \right] \frac{h}{T^{j_3}} \cdots \frac{h}{T^{j_{s+1}}} v \\
\qquad{} = -2 \frac{f}{T^{a-1-n}} fT^{\ell-j_1-j_2} \frac{h}{T^{j_3}} \cdots \frac{h}{T^{j_{s+1}}} v + \frac{f}{T^{a-1-n}} fT^{\ell-j_2} \frac{h}{T^{j_1}} \frac{h}{T^{j_3}} \cdots \frac{h}{T^{j_{s+1}}} v,
\end{gather*}
where in each vector the number of $h$'s is less than or equal to~$s$. Repeating the argument above, we see that by induction hypothesis we get
\begin{gather*}
\sum_{\ell=1}^{a-1} \bigg\langle \mathop{\sum_{i+j=a-1-\ell}}_{i\geq j\geq 0} \left(\frac{f}{T^i}\frac{f}{T^j}v\right)^*, \frac{f}{T^{a-1-n}} \frac{h}{T^{j_1}} fT^{\ell-j_2} \frac{h}{T^{j_3}} \cdots \frac{h}{T^{j_{s+1}}} v \bigg\rangle = 0.
\end{gather*}
Now in the second summand in (\ref{2f}),
\begin{gather*}
\frac{f}{T^{a-1-n}} \frac{h}{T^{j_1}} \frac{h}{T^{j_2}} fT^{\ell} \frac{h}{T^{j_3}} \cdots \frac{h}{T^{j_{s+1}}} v,
\end{gather*}
we pull $fT^{\ell}$ to the right and at each step we use induction hypothesis to argue that we keep getting zeros. Ultimately, we get a vector
\begin{gather*}
\frac{f}{T^{a-1-n}} \frac{h}{T^{j_1}} \cdots \frac{h}{T^{j_{s+1}}} fT^{\ell} v,
\end{gather*}
which is zero, since $fT^{\ell}$ has grading $(-\ell+1, -\ell) \leq (0,-1)$ and so $fT^{\ell}v=0$. Therefore,
\begin{gather*}
\sum_{\ell=1}^{a-1} \bigg\langle \frac{e}{T^{\ell}} \mathop{\sum_{i+j=a-1-\ell}}_{i\geq j\geq 0} \left(\frac{f}{T^i}\frac{f}{T^j}v\right)^*, \frac{f}{T^{a-1-n}} \frac{h}{T^{j_1}} \cdots \frac{h}{T^{j_{s}}} v \bigg\rangle = 0.
\end{gather*}
Formula (\ref{G1sg3}) and Lemma \ref{lem G1s} are proved.
\end{proof}
Proposition \ref{Pr3} is proved.
\end{proof}

\subsection[Group ${\rm II}$]{Group $\boldsymbol{{\rm II}}$}\label{subsec group 2}
\begin{prop}\label{Pr2}The value on the right-hand side of \eqref{AA} on any basis vector from Group~${\rm II}$ equals zero.
\end{prop}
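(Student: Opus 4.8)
A basis vector of Group~${\rm II}$ has the form
\[
w=\frac{f}{T^{i_1}}\frac{f}{T^{i_2}}\frac{h}{T^{j_1}}\cdots\frac{h}{T^{j_s}}\frac{e}{T^{l}}v,
\]
with $i_1\geq i_2\geq 0$, $j_1\geq\cdots\geq j_s\geq 1$, $l\geq 1$, $s\geq 0$, and $i_1+i_2+j_1+\cdots+j_s+l=a-1$. By Section~\ref{subsec structure} it suffices to show that pairing the right-hand side of~\eqref{AA} with $w$ gives zero, since the left-hand side of~\eqref{AA} vanishes on $w$ (because $w\neq\frac{f}{T^{a-1}}v$). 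Applying Lemma~\ref{lem pere} to the three groups of terms, we must prove
\[
\big\langle(v)^*, eT^{a-1}w\big\rangle-\sum_{\ell=1}^{a-1}\bigg\langle\Big(\frac{f}{T^{a-1-\ell}}v\Big)^*, hT^{\ell}w\bigg\rangle-2\sum_{\ell=1}^{a-1}\bigg\langle\mathop{\sum_{i+j=a-1-\ell}}_{i\geq j\geq 0}\Big(\frac{f}{T^i}\frac{f}{T^j}v\Big)^*, fT^{\ell}w\bigg\rangle=0 .
\]

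The strategy is the one used for Groups~${\rm O}$ and~${\rm I}$: in each pairing I commute the relevant element $eT^{a-1}$, $hT^{\ell}$, or $fT^{\ell}$ of $\widehat{{\mathfrak{sl}_2}}$ to the right through $w$, using only the bracket of $\widehat{{\mathfrak{sl}_2}}$, until it is applied to $v$, where it either vanishes for grading reasons (any vector of $\mathbb{Z}^2$-degree with a negative coordinate is zero) or, after all factors are used up, produces a scalar. Passage through the $f$-factors produces $f$-factors (via $[h,f]=-2f$) or $h$-factors (via $[e,f]=h$); passage through the $h$-factors produces $f$- or $e$-factors and, when two modes have complementary powers, central terms proportional to $k$. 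The feature distinguishing Group~${\rm II}$ from Group~${\rm I}$ is the extra factor $\frac{e}{T^{l}}$: one checks that $eT^{a-1}$ commutes with it (both $[e,e]=0$ and $\langle e,e\rangle=0$), so the first pairing is analogous to the computations in Lemma~\ref{Group 3, s=1} and Lemma~\ref{lem G1s}, now with two $f$-factors and a passive $\frac{e}{T^{l}}$; but $hT^{\ell}$ and $fT^{\ell}$ do not commute with $\frac{e}{T^{l}}$, since
\[
\big[hT^{\ell},\frac{e}{T^{l}}\big]=2\,eT^{\ell-l},\qquad
\big[fT^{\ell},\frac{e}{T^{l}}\big]=-\,hT^{\ell-l}+\ell\,\delta_{\ell,l}\,k .
\]
After full reduction one retains only the summands that, re-ordered into the PBW basis, are scalar multiples of the single basis vector paired against ($v$ in the first term, $\frac{f}{T^{a-1-\ell}}v$ in the second, and one term of the symmetric sum $\sum_{i+j=a-1-\ell,\,i\geq j\geq 0}$ in the third); grading and ordering constraints annihilate every other summand, exactly as in the Group~${\rm I}$ computations, leaving a handful of explicit scalar contributions indexed by subintervals of $\{1,\dots,a-1\}$.

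I would organize the bookkeeping as an induction on $s$, the number of $h$-factors, with $s=0$ (the vector $w=\frac{f}{T^{i_1}}\frac{f}{T^{i_2}}\frac{e}{T^{l}}v$, the genuinely new base case relative to Group~${\rm I}$) and possibly $s=1$ handled directly, and with the inductive step using the manoeuvre of~\eqref{trick_notes}: add to a partial sum a vanishing sum supplied by the induction hypothesis so as to recombine it into a single commutator, thereby peeling off one $h$-factor at a time. The main obstacle is precisely the factor $\frac{e}{T^{l}}$: compared with Group~${\rm I}$ one must carry along the cross-terms $2\,eT^{\ell-l}$ (which for $\ell<l$ do \emph{not} annihilate $v$, so they survive as $e$-factors) together with the central contribution $\ell\,\delta_{\ell,l}\,k$ arising in the $fT^{\ell}$-computation, and the task is to verify that all of these extra contributions cancel in the total alongside the already delicate cancellations of Group~${\rm I}$; a secondary point is that, with two $f$-factors up front, each surviving term pairs with the unique ordered pair $(i,j)=(\max,\min)$ of the available exponents, the diagonal case $i=j$ being covered by the convention in the symmetric sum. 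Collecting the scalar contributions then yields cancellations of the form $cN-cN=0$ over equinumerous index ranges, as in Propositions~\ref{Pr4} and~\ref{Pr3}, which proves that the right-hand side of~\eqref{AA} vanishes on every Group~${\rm II}$ basis vector.
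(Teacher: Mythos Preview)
Your proposal is a strategy outline rather than a proof: you describe how you \emph{would} organize the computation (induction on~$s$, tracking cross-terms from~$\frac{e}{T^{l}}$, cancellations ``of the form $cN-cN=0$ over equinumerous index ranges''), but you never carry out a single pairing. For a proposition whose entire content is a computational identity, that is a genuine gap.

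More importantly, the approach you sketch is both more complicated than necessary and mis-predicts the structure of the answer. The paper does \emph{not} induct on~$s$; instead it computes each of the three pairings directly for arbitrary~$s$ and obtains closed forms (this is Lemma~\ref{Group 2}):
\[
\big\langle (v)^*, eT^{a-1}w\big\rangle = 2^{s+1}(m-lk),\qquad
\Big\langle \Big(\tfrac{f}{T^{a-1-\ell}}v\Big)^*, hT^{\ell}w\Big\rangle =
\begin{cases}2^{s+2}(m-lk)&\text{if }i_1=i_2=a{-}1{-}\ell,\\2^{s+1}(m-lk)&\text{if exactly one }i_p=a{-}1{-}\ell,\\0&\text{otherwise},\end{cases}
\]
and the third pairing equals $-2^{s}(m-lk)$ for the single value $\ell=a-1-i_1-i_2$ and zero otherwise. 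The cancellation is then the three-term identity $2^{s+1}(m-lk)-2\cdot 2^{s+1}(m-lk)+2\cdot 2^{s}(m-lk)=0$, not a Group~I-style telescoping over ranges of~$\ell$. The reason no induction is needed is that in each computation the chain of $h$-factors is traversed in one pass: at each step the ``other'' commutator term is a monomial of strictly negative degree, hence annihilates~$v$, so only one branch survives and each $\frac{h}{T^{j_p}}$ contributes a factor of~$2$. Similarly, the cross-terms you flag from $[hT^{\ell},\frac{e}{T^{l}}]$ and $[fT^{\ell},\frac{e}{T^{l}}]$ do arise, but in the paper they are disposed of immediately: they either have too many $f$'s to pair nontrivially with the given covector, or they hit~$v$ with negative degree. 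You should replace the inductive plan by these direct closed-form evaluations.
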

\begin{proof}Group ${\rm II}$ consists of vectors
\begin{gather*}
w=\frac{f}{T^{i_1}}\frac{f}{T^{i_2}}\frac{h}{T^{j_1}} \cdots \frac{h}{T^{j_s}} \frac{e}{T^l}v.
\end{gather*}

\begin{lem}\label{Group 2} We have
\begin{gather}
\bigg\langle \frac{f}{T^{a-1}} (v)^*, w \bigg\rangle = 2^{s+1} (m - lk), \label{G21} \\
\bigg\langle \frac{h}{T^{\ell}} \left(\frac{f}{T^{a-1-\ell}}v\right)^*, w \bigg\rangle =
 \begin{cases}
 2^{s+2} (m - lk), & \text{if $i_1 = i_2 = a-1-\ell$},\\
 2^{s+1} (m - lk), & \text{if $i_1 \ne i_2$ and $i_{1}$ or $i_2 = a-1-\ell$}, \\
 0, & \text{otherwise},
 \end{cases} \label{G22} \\
\bigg\langle \frac{e}{T^{\ell}}
\mathop{\sum_{i+j=a-1-\ell}}_{i\geq j\geq 0} \left(\frac{f}{T^i}\frac{f}{T^j}v\right)^*, w \bigg\rangle =
 \begin{cases}
 - 2^{s} (m - lk), & \text{if $\ell = a-1 - i_1 - i_2$},\\
 0, & \text{otherwise}.
 \end{cases} \label{G23}
\end{gather}
\end{lem}

\begin{proof}We have
\begin{gather}
\bigg\langle \frac{f}{T^{a-1}} (v)^*, w \bigg\rangle = \big\langle (v)^*, eT^{a-1} w \big\rangle = \bigg\langle (v)^*, \left[ hT^{a-1-i_1} +
\frac{f}{T^{i_1}} eT^{a-1} \right]
 \frac{f}{T^{i_2}}\frac{h}{T^{j_1}} \cdots \frac{h}{T^{j_s}} \frac{e}{T^l}v \bigg\rangle \nonumber \\
{}= \bigg\langle (v)^*, hT^{a-1-i_1}
\frac{f}{T^{i_2}}\frac{h}{T^{j_1}} \cdots \frac{h}{T^{j_s}} \frac{e}{T^l}v \bigg\rangle +
\bigg\langle (v)^*, \frac{f}{T^{i_1}} eT^{a-1}
 \frac{f}{T^{i_2}}\frac{h}{T^{j_1}} \cdots \frac{h}{T^{j_s}} \frac{e}{T^l}v \bigg\rangle. \label{he}
\end{gather}
Note that $eT^{a-1} \frac{f}{T^{i_2}} \frac{h}{T^{j_1}} \cdots \frac{h}{T^{j_s}} \frac{e}{T^l}$ is of degree $(-i_1 - 1, -i_1) \leq (-1,0)$, hence \newline $eT^{a-1} \frac{f}{T^{i_2}} \frac{h}{T^{j_1}}\cdots\frac{h}{T^{j_s}} \frac{e}{T^{l}} v = 0$. In the first summand in~(\ref{he}) we pull $hT^{a-1-i_1}$ to the right to get
\begin{gather*}
\bigg\langle (v)^*, -2 fT^{a-1-i_1-i_2} \frac{h}{T^{j_1}} \cdots \frac{h}{T^{j_s}} \frac{e}{T^l}v \bigg\rangle = \dots = \bigg\langle (v)^*, -2^{s+1} fT^{a-1-i_1-i_2-j_1-\dots -a_s} \frac{e}{T^l}v \bigg\rangle \\
\qquad{} = \bigg\langle (v)^*, -2^{s+1} fT^{l} \frac{e}{T^l}v \bigg\rangle = \big\langle (v)^*, 2^{s+1} (h - lc) v \big\rangle = 2^{s+1} (m - lk),
\end{gather*}
where at each step we do not write monomials of negative degree, since they give zero when applied to~$v$. This proves formula~(\ref{G21}).

We have
\begin{gather*}
\bigg\langle \frac{h}{T^\ell} \left(\frac{f}{T^{a-1-\ell}}v\right)^*, w \bigg\rangle = \bigg\langle \left(\frac{f}{T^{a-1-\ell}}v\right)^*, hT^{\ell} w \bigg\rangle,\\
hT^{\ell} w = \left[ -2fT^{\ell-i_1} + \frac{f}{T^{i_1}} hT^{\ell} \right] \frac{f}{T^{i_2}}\frac{h}{T^{j_1}} \cdots \frac{h}{T^{j_s}} \frac{e}{T^l} v \\
\hphantom{hT^{\ell} w} = -2fT^{\ell-i_1} \frac{f}{T^{i_2}}\frac{h}{T^{j_1}} \cdots \frac{h}{T^{j_s}} \frac{e}{T^l} v + \frac{f}{T^{i_1}} \left[ -2fT^{\ell-i_2} + \frac{f}{T^{i_2}} hT^{\ell} \right] \frac{h}{T^{j_1}} \cdots \frac{h}{T^{j_s}} \frac{e}{T^l} v \\
\hphantom{hT^{\ell} w} = \left[ -2 \frac{f}{T^{i_2}} fT^{\ell-i_1} - 2 \frac{f}{T^{i_1}}fT^{\ell-i_2} + \frac{f}{T^{i_1}} \frac{f}{T^{i_2}} hT^{\ell} \right] \frac{h}{T^{j_1}} \cdots \frac{h}{T^{j_s}} \frac{e}{T^l} v.
\end{gather*}
Note that the vector $\frac{f}{T^{i_1}} \frac{f}{T^{i_2}} hT^{\ell} \frac{h}{T^{j_1}} \cdots \frac{h}{T^{j_s}} \frac{e}{T^l} v$ after pulling $hT^{\ell}$ to the right either becomes a~zero vector or a~vector with two $f$'s, which of course gives zero when pairing with a basis vector with one $f$. Also, note that the only possibility for the vector $\frac{f}{T^{i_2}} fT^{\ell-i_1} \frac{h}{T^{j_1}} \cdots \frac{h}{T^{j_s}} \frac{e}{T^l} v$ to give a~nonzero pairing with $\big(\frac{f}{T^{a-1-\ell}}v\big)^*$ is when $i_2 = a-1-\ell$. Similarly $\frac{f}{T^{i_1}} fT^{\ell-i_2} \frac{h}{T^{j_1}} \cdots \frac{h}{T^{j_s}} \frac{e}{T^l} v$ gives a nonzero number only if $i_1 = a-1-\ell$. First consider the case $i_1=i_2=a-1-\ell$. We have
\begin{gather*}
- 4 \frac{f}{T^{a-1-\ell}} fT^{2\ell-a+1} \frac{h}{T^{j_1}} \cdots \frac{h}{T^{j_s}} \frac{e}{T^l} v \\
\qquad{} = - 4 \frac{f}{T^{a-1-\ell}} \left[ 2 fT^{2\ell-a+1-j_1} + \frac{h}{T^{j_1}} fT^{2\ell-a+1} \right] \frac{h}{T^{j_2}} \cdots \frac{h}{T^{j_s}} \frac{e}{T^l} v.
\end{gather*}
Note that $fT^{2\ell-a+1} \frac{h}{T^{j_2}} \cdots \frac{h}{T^{j_s}} \frac{e}{T^l}$ is of degree $(-j_1,-j_1) \leq (-1,-1)$, hence
\begin{gather*} fT^{2\ell-a+1} \frac{h}{T^{j_2}} \cdots \frac{h}{T^{j_s}} \frac{e}{T^l} v =0.\end{gather*} So we get
\begin{gather*}
 -8 \frac{f}{T^{a-1-\ell}} fT^{2\ell-a+1-j_1} \frac{h}{T^{j_2}} \cdots \frac{h}{T^{j_s}} \frac{e}{T^l} v \cdots
 = -2^{s+2} \frac{f}{T^{a-1-\ell}} fT^{2\ell-a+1-j_1-\cdots-j_s} \frac{e}{T^l} v \\
\qquad{} = - 2^{s+2} \frac{f}{T^{a-1-\ell}} fT^{l} \frac{e}{T^l} v = 2^{s+2} \frac{f}{T^{a-1-\ell}} (h - lc) v = 2^{s+2} (m - lk),
\end{gather*}
where at each step we don't write monomials of negative degree, since they give zero when applied to $v$.

For $i_1 = a-1-\ell \ne i_2$ and $i_2 = a-1-\ell \ne i_1$ we have
\begin{gather*}
- 2 \frac{f}{T^{a-1-\ell}} fT^{2\ell-a+1} \frac{h}{T^{j_1}} \dots \frac{h}{T^{j_s}} \frac{e}{T^l} v = 2^{s+1} (m - lk),
\end{gather*}
where we performed the exact same computation as above. Therefore, formula (\ref{G22}) is proved.

We have
\begin{gather*}
 \bigg\langle \frac{e}{T^{\ell}} \mathop{\sum_{i+j=a-1-\ell}}_{i\geq j\geq 0} \left(\frac{f}{T^i}\frac{f}{T^j}v\right)^*, w \bigg\rangle = \bigg\langle \mathop{\sum_{i+j=a-1-\ell}}_{i\geq j\geq 0} \left(\frac{f}{T^i}\frac{f}{T^j}v\right)^*, fT^{\ell} w \bigg\rangle, \\
fT^{\ell} w = \frac{f}{T^{i_1}}\frac{f}{T^{i_2}} fT^{\ell} \frac{h}{T^{j_1}} \cdots \frac{h}{T^{j_s}} \frac{e}{T^l} v = \frac{f}{T^{i_1}}\frac{f}{T^{i_2}} \left[ 2 fT^{\ell-j_1} +\frac{h}{T^{j_1}} fT^\ell \right] \frac{h}{T^{j_2}} \cdots \frac{h}{T^{j_s}} \frac{e}{T^l} v.
\end{gather*}
The only nonzero pairing happens when $\ell$ is such that $i_1+i_2 = a-1-\ell$. In that case $fT^{\ell} \frac{h}{T^{j_2}} \cdots \frac{h}{T^{j_s}} \frac{e}{T^l}$ has degree $(-j_1, -j_1) \leq (-1,-1)$, hence $fT^{\ell} \frac{h}{T^{j_2}} \cdots \frac{h}{T^{j_s}} \frac{e}{T^l}v=0$. Therefore we have
\begin{gather*}
2 \frac{f}{T^{i_1}}\frac{f}{T^{i_2}} fT^{\ell-j_1} \frac{h}{T^{j_2}} \cdots \frac{h}{T^{j_s}} \frac{e}{T^l} v = 2^s \frac{f}{T^{i_1}}\frac{f}{T^{i_2}} fT^{\ell-j_1 - \dots - j_s} \frac{e}{T^l} v = 2^s \frac{f}{T^{i_1}}\frac{f}{T^{i_2}} fT^{l} \frac{e}{T^l} v,
\end{gather*}
where we pulled $fT^{\ell-j_1}$ to the right and did not write monomials of negative degree, since they give zero when applied to $v$. Hence we get
\begin{gather*}
2^s \frac{f}{T^{i_1}}\frac{f}{T^{i_2}} (-h +lc) v.
\end{gather*}
Therefore,
\begin{gather*}
\bigg\langle \frac{e}{T^{\ell}} \mathop{\sum_{i+j=a-1-\ell}}_{i\geq j\geq 0} \left(\frac{f}{T^i}\frac{f}{T^j}v\right)^*, w \bigg\rangle \\
\qquad{} = \mathop{\sum_{i+j=a-1-\ell}}_{i\geq j\geq 0}\bigg\langle \left(\frac{f}{T^i}\frac{f}{T^j}v\right)^*, 2^s \frac{f}{T^{i_1}}\frac{f}{T^{i_2}} (-h +lc) v \bigg\rangle = - 2^{s} (m - lk),
\end{gather*}
since for $i = i_1$, $j = i_2$ we get $-2^s (m - lk)$ and zero for other pairs $(i,j)$. Formula (\ref{G23}) and Lemma~\ref{Group 2} are proved.
\end{proof}

By Lemma \ref{Group 2}, we have
\begin{gather*}
\bigg\langle \frac{f}{T^{a-1}} (v)^*-
\sum_{\ell=1}^{a-1}\bigg[\frac{h}{T^{\ell}}\left(
\frac{f}{T^{a-1-\ell}}v\right)^* + 2\frac{e}{T^{\ell}} \mathop{\sum_{i+j=a-1-\ell}}_{i\geq j\geq 0} \left(\frac{f}{T^i}\frac{f}{T^j}v\right)^*\bigg], w \bigg\rangle \\
\qquad{} = 2^{s+1} (m-lk) - 2 \cdot 2^{s+1} (m-lk) + 2 \cdot 2^{s} (m-lk) = 0.
\end{gather*}
Note that
\begin{gather*}
 \bigg\langle \sum_{\ell=1}^{a-1} \frac{h}{T^{\ell}}\left(\frac{f}{T^{a-1-\ell}}v\right)^*, w \bigg\rangle = 2 \cdot 2^{s+1} (m - lk)
\end{gather*}
in both cases $i_1 = i_2$ and $i_1 \ne i_2$. Also note that
\begin{gather*}
\bigg\langle \sum_{\ell=1}^{a-1}\ \frac{e}{T^{\ell}} \mathop{\sum_{i+j=a-1-\ell}}_{i\geq j\geq 0} \left(\frac{f}{T^i}\frac{f}{T^j}v\right)^*, w \bigg\rangle \ne 0
\end{gather*}
only if $\ell$ is such that $i_1+i_2 = a-1-\ell$. Therefore, Proposition \ref{Pr2} is proved.
\end{proof}

\subsection[Group ${\rm III}$]{Group $\boldsymbol{{\rm III}}$}
\begin{prop}\label{Pr1}The value of the right-hand side of \eqref{AA} on any basis vector of Group~${\rm III}$ equals zero.
\end{prop}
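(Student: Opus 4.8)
The plan is to prove Proposition~\ref{Pr1} by the same direct computation that settled Propositions~\ref{Pr2} and~\ref{Pr3}. Fix a Group~III basis vector
\[
w=\frac{f}{T^{i_1}}\cdots\frac{f}{T^{i_r}}\,\frac{h}{T^{j_1}}\cdots\frac{h}{T^{j_s}}\,\frac{e}{T^{l_1}}\cdots\frac{e}{T^{l_{r-1}}}v,\qquad r\geq 3,
\]
with $i_1\geq\cdots\geq i_r\geq 0$, $j_1\geq\cdots\geq j_s\geq 1$, $l_1\geq\cdots\geq l_{r-1}\geq 1$ and $\sum_p i_p+\sum_q j_q+\sum_p l_p=a-1$. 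By Lemma~\ref{lem pere} the value of the right-hand side of~(\ref{AA}) on $w$ equals
\begin{gather*}
\big\langle (v)^*,\,eT^{a-1}w\big\rangle
-\sum_{\ell=1}^{a-1}\Big\langle\big(\textstyle\frac{f}{T^{a-1-\ell}}v\big)^*,\,hT^{\ell}w\Big\rangle\\
{}-2\sum_{\ell=1}^{a-1}\Big\langle\mathop{\sum_{i+j=a-1-\ell}}_{i\geq j\geq 0}\big(\textstyle\frac{f}{T^{i}}\frac{f}{T^{j}}v\big)^*,\,fT^{\ell}w\Big\rangle,
\end{gather*}
and the goal is to show this vanishes. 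In each pairing one pushes $eT^{a-1}$, $hT^{\ell}$ or $fT^{\ell}$ to the right through the strings of $f$'s, $h$'s and $e$'s of $w$ by the brackets of $\widehat{\mathfrak{sl}_2}$, discarding at every step each monomial that ends in a factor $xT^{p}v$ with $p\geq 0$ since it annihilates $v$; the length constraint $\sum_p i_p+\sum_q j_q+\sum_p l_p=a-1$ guarantees, exactly as in the earlier propositions, that the $e$- and $f$-monomials reaching $v$ after these reductions have nonnegative $T$-power.

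I would organize the computation by induction on the number $s$ of $h$-letters (the base case $s=0$ being done directly) together with an induction on the number $r$ of $f$-letters: peeling off the leftmost factor $\frac{f}{T^{i_1}}$ and using the commutation relations reduces a Group~III pairing to pairings of vectors with $r-1$ $f$-letters, and when this count drops to two one is in the Group~II situation already treated in Lemma~\ref{Group 2}. The $s$-induction is run as in Lemmas~\ref{Group 3, s=1} and~\ref{lem G1s}, through the identity that lets one replace $hT^{\ell}\frac{f}{T^{c}}$ by $\frac{f}{T^{c}}hT^{\ell}+2\frac{f}{T^{c-\ell}}$ inside an $\ell$-sum at the cost of a term killed by the induction hypothesis (and similarly for $fT^{\ell}$). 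Three structural observations streamline the bookkeeping: first, $eT^{a-1}$ carries a single $e$, so every surviving term of $eT^{a-1}w$ has one $f$-letter replaced by an $h$-letter and hence $r-1$ $f$-letters, which forces all $r-1$ original $e$-letters to be consumed against $f$-letters before the term can reduce to a multiple of $v$; second, the covectors $\big(\frac{f}{T^{a-1-\ell}}v\big)^*$ and $\big(\frac{f}{T^{i}}\frac{f}{T^{j}}v\big)^*$ carry only one and two $f$-letters, so the surviving basis vector in each reduction is essentially unique and this pins down the admissible values of $\ell$; third, as in Lemma~\ref{Group 2} every individual nonzero pairing turns out to be a monomial of the shape $2^{\#}(m-l_p k)$ with a combinatorial multiplicity counting interleavings of the $h$-letters.

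The main obstacle is precisely this combinatorial bookkeeping: tracking the signs and powers of $2$ produced by commuting a single element past the $f$- and $h$-strings, identifying for each insertion slot the unique Poincar\'{e}--Birkhoff--Witt basis vector dual to the relevant covector, and verifying that the multiplicities produced by the three sums combine to zero --- the identity generalizing the Group~II cancellation $2^{s+1}(m-lk)-2\cdot 2^{s+1}(m-lk)+2\cdot 2^{s}(m-lk)=0$. Concretely, I expect to prove for each fixed tuple of indices three explicit formulas analogous to~(\ref{G21})--(\ref{G23}) --- one for $\big\langle(v)^*,eT^{a-1}w\big\rangle$, one for $\sum_{\ell}\big\langle\big(\frac{f}{T^{a-1-\ell}}v\big)^*,hT^{\ell}w\big\rangle$, and one for $\sum_{\ell}\big\langle\mathop{\sum_{i+j=a-1-\ell}}_{i\geq j\geq 0}\big(\frac{f}{T^{i}}\frac{f}{T^{j}}v\big)^*,fT^{\ell}w\big\rangle$ --- each reduced through the inductions to the $r=2$ case of Lemma~\ref{Group 2}, and then to add them and read off the cancellation.
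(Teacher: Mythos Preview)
Your plan is far more elaborate than what the situation requires, and it rests on a mistaken expectation. You anticipate that each of the three pairings will produce nonzero quantities of the shape $2^{\#}(m-l_pk)$ which then cancel, as in Group~II. In fact, for Group~III each of the three pairings vanishes \emph{individually}, so there is nothing to cancel and no induction on $r$ or $s$ is needed.

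The decisive observation you are missing is a simple count of $f$-letters. The covectors $(v)^*$, $\big(\frac{f}{T^{a-1-\ell}}v\big)^*$ and $\big(\frac{f}{T^{i}}\frac{f}{T^{j}}v\big)^*$ pair nontrivially only with PBW basis vectors having zero, one, and two $f$-letters respectively. Now look at what happens when you push $eT^{a-1}$, $hT^{\ell}$ or $fT^{\ell}$ through $w$. For $hT^{\ell}w$: commuting $hT^{\ell}$ past an $\frac{f}{T^{i_p}}$ either leaves that $f$ intact (and $hT^{\ell}$ continues rightward) or replaces $hT^{\ell}$ by some $fT^{\ell-i_p}$, which commutes with the remaining $f$'s and then interacts with the $h$- and $e$-string without ever increasing the $e$-count or decreasing the $f$-count below $r-1$. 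Hence every resulting PBW term carries at least $r-1\geq 2$ $f$-letters and is orthogonal to $\big(\frac{f}{T^{a-1-\ell}}v\big)^*$. For $fT^{\ell}w$: $fT^{\ell}$ commutes with all the $\frac{f}{T^{i_p}}$ and then interacts with $h$'s and $e$'s, leaving the original $r\geq 3$ $f$-letters untouched; so every term has at least three $f$'s and is orthogonal to the two-$f$ covectors. For $eT^{a-1}w$: two successive commutations turn $eT^{a-1}$ into $hT^{a-1-i_1}$ and then into $fT^{a-1-i_1-i_2}$ (the discarded pieces vanish on $v$ by degree), and then $fT^{a-1-i_1-i_2}$ applied to $\frac{h}{T^{j_1}}\cdots\frac{e}{T^{l_{r-1}}}v$ has nonpositive first degree component since $r\geq 3$, so it kills $v$. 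That is the whole proof in the paper.

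Your proposed induction on $r$, reducing to the Group~II lemma, would not work cleanly in any case: peeling off $\frac{f}{T^{i_1}}$ changes the graded component, so the covectors and the constraint $\sum i_p+\sum j_q+\sum l_p=a-1$ no longer match the hypotheses of Lemma~\ref{Group 2}.
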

\begin{proof}A vector in Group ${\rm III}$ has the form
\begin{gather*}
w= \frac{f}{T^{i_1}}\cdots\frac{f}{T^{i_r}}
\frac{h}{T^{j_1}}\cdots\frac{h}{T^{j_s}}
\frac{e}{T^{l_1}}\cdots\frac{e}{T^{l_{r-1}}}v,
\end{gather*}
where $r \geq 3$.

\begin{lem}\label{Group 1}For every $\ell \in \{1, \dots, a-1 \}$, we have
\begin{gather}
\bigg\langle \frac{f}{T^{a-1}} (v)^*, w \bigg\rangle =0, \label{G31} \\
\bigg\langle \frac{h}{T^{\ell}} \left(\frac{f}{T^{a-1-\ell}}v\right)^*, w \bigg\rangle =0, \label{G32} \\
 \bigg\langle \frac{e}{T^{\ell}} \mathop{\sum_{i+j=a-1-\ell}}_{i\geq j\geq 0} \left(\frac{f}{T^i}\frac{f}{T^j}v\right)^*, w \bigg\rangle = 0. \label{G33}
\end{gather}
\end{lem}

\begin{proof}We have
\begin{gather*}
 \bigg\langle \frac{f}{T^{a-1}} (v)^*, w \bigg\rangle = \big\langle (v)^*, eT^{a-1} w \big\rangle \\
\hphantom{\bigg\langle \frac{f}{T^{a-1}} (v)^*, w \bigg\rangle} = \bigg\langle (v)^*, \left[hT^{a-1-i_1} +\frac{f}{T^{i_1}} eT^{a-1}\right]
 \frac{f}{T^{i_2}} \cdots \frac{f}{T^{i_r}} \frac{h}{T^{j_1}}\cdots\frac{h}{T^{j_s}} \frac{e}{T^{l_1}}\cdots\frac{e}{T^{l_{r-1}}} v \bigg\rangle.
\end{gather*}
Note that $eT^{a-1} \frac{f}{T^{i_2}} \cdots \frac{f}{T^{i_r}} \frac{h}{T^{j_1}}\cdots\frac{h}{T^{j_s}} \frac{e}{T^{l_1}}\cdots\frac{e}{T^{l_{r-1}}} v $ is of degree $(-i_1 - 1,-i_1) \leq (-1,0)$, hence zero. So we have
\begin{gather*}
\bigg\langle (v)^*, \left[-2 fT^{a-1-i_1-i_2} +\frac{f}{T^{i_2}} hT^{a-1-i_1} \right]
\frac{f}{T^{i_3}} \cdots \frac{f}{T^{i_r}} \frac{h}{T^{j_1}}\cdots
\frac{h}{T^{j_s}} \frac{e}{T^{l_1}}\cdots\frac{e}{T^{l_{r-1}}} v \bigg\rangle.
\end{gather*}
As above, note that $hT^{a-1-i_1} \frac{f}{T^{i_3}} \cdots \frac{f}{T^{i_r}} \frac{h}{T^{j_1}}\cdots\frac{h}{T^{j_s}} \frac{e}{T^{l_1}}\cdots\frac{e}{T^{l_{r-1}}} v $ is of degree $ (-i_2-1, -i_2) \leq (-1,0)$, hence zero. Therefore we obtain
\begin{gather*}
\bigg\langle (v)^*, -2 \frac{f}{T^{i_3}} \cdots \frac{f}{T^{i_r}} fT^{a-1-i_1-i_2} \frac{h}{T^{j_1}}\cdots\frac{h}{T^{j_s}} \frac{e}{T^{l_1}}\cdots\frac{e}{T^{l_{r-1}}} v \bigg\rangle = 0,
\end{gather*}
since $fT^{a-1-i_1-i_2} \frac{h}{T^{j_1}}\cdots\frac{h}{T^{j_s}} \frac{e}{T^{l_1}}\cdots\frac{e}{T^{l_{r-1}}} v $ is of degree $(-i_3 - \dots - i_r - r + 2, -i_2 - \dots - i_r) \leq (-1,0)$ for $r \geq 3$, hence zero. Formula (\ref{G31}) is proved.

We have
\begin{gather}
 \bigg\langle \frac{h}{T^\ell} \left(\frac{f}{T^{a-1-\ell}}v\right)^*, w \bigg\rangle = \bigg\langle \left(\frac{f}{T^{a-1-\ell}}v\right)^*, hT^{\ell} w \bigg\rangle,\nonumber\\
hT^{\ell} w = \left[-2 fT^{\ell-i_1}+ \frac{f}{T^{i_1}} hT^{\ell} \right] \frac{f}{T^{i_2}}\cdots\frac{f}{T^{i_r}} \frac{h}{T^{j_1}}\cdots\frac{h}{T^{j_s}} \frac{e}{T^{l_1}}\cdots\frac{e}{T^{l_{r-1}}} v \nonumber \\
\hphantom{hT^{\ell} w} = -2 \frac{f}{T^{i_2}}\cdots\frac{f}{T^{i_r}} fT^{\ell-i_1} \frac{h}{T^{j_1}}\cdots\frac{h}{T^{j_s}} \frac{e}{T^{l_1}}\cdots\frac{e}{T^{l_{r-1}}} v \label{fr1} \\
\hphantom{hT^{\ell} w=} + \frac{f}{T^{i_1}} hT^{\ell} \frac{f}{T^{i_2}}\cdots\frac{f}{T^{i_r}} \frac{h}{T^{j_1}}\cdots\frac{h}{T^{j_s}} \frac{e}{T^{l_1}}\cdots\frac{e}{T^{l_{r-1}}} v. \label{fr2}
\end{gather}
Observe that for $\ell \leq i_1$ in~(\ref{fr1}) we have a basis vector, hence it gives zero when pairing with $\big(\frac{f}{T^{a-1-\ell}}v\big)^*$. If $\ell > i_1$, then we pull $fT^{\ell-i_1}$ to the right and notice that no matter how $fT^{\ell-i_1}$ interacts with $h$'s and $e$'s, it does not affect the number of~$f$'s, which is greater or equal than two. Hence, the vector in~(\ref{fr1}) gives zero when pairing with $\big(\frac{f}{T^{a-1-\ell}}v\big)^*$.

In (\ref{fr2}) note that
\begin{gather*}
hT^{\ell} \frac{f}{T^{i_2}} = -2 fT^{\ell-i_2} + \frac{f}{T^{i_2}} hT^{\ell}
\end{gather*}
so that either $hT^{\ell}$ is pulled to the right not affecting the number of $f$'s or it gives $fT^{\ell-i_2}$, for which we apply the same argument as above after pulling it to the right to argue that the pairing of the vector in~(\ref{fr2}) with $\big(\frac{f}{T^{a-1-\ell}}v\big)^*$ is zero. Formula (\ref{G32}) is proved.

We have
\begin{gather*}
\bigg\langle \frac{e}{T^{\ell}} \mathop{\sum_{i+j=a-1-\ell}}_{i\geq j\geq 0} \left(\frac{f}{T^i}\frac{f}{T^j}v\right)^*, w \bigg\rangle = \bigg\langle \mathop{\sum_{i+j=a-1-\ell}}_{i\geq j\geq 0} \left(\frac{f}{T^i}\frac{f}{T^j}v\right)^*, fT^{\ell} w \bigg\rangle, \\
fT^{\ell} w = fT^{\ell} \frac{f}{T^{i_1}}\cdots\frac{f}{T^{i_r}}
\frac{h}{T^{j_1}}\cdots\frac{h}{T^{j_s}} \frac{e}{T^{l_1}}\cdots\frac{e}{T^{l_{r-1}}} v \\
\hphantom{fT^{\ell} w} = \frac{f}{T^{i_1}}\cdots\frac{f}{T^{i_r}} fT^{\ell}
\frac{h}{T^{j_1}}\cdots\frac{h}{T^{j_s}} \frac{e}{T^{l_1}}\cdots\frac{e}{T^{l_{r-1}}} v.
\end{gather*}
As in formula (\ref{G32}), no matter how $fT^{\ell}$ interacts with $h$'s and $e$'s, the number of $f$'s remains unchanged, i.e., we have more than or equal to three~$f$'s, so that pairing with $\big(\frac{f}{T^i}\frac{f}{T^j}v\big)^*$ is zero. Formula~(\ref{G33}) is proved.
\end{proof}

Proposition \ref{Pr1} follows from Lemma \ref{Group 1}.
\end{proof}

Theorem \ref{thm form} is proved.

\subsection*{Acknowledgements}
The authors thank V.~Schechtman for useful discussions. The second author was supported in part by NSF grant DMS-1665239.

\pdfbookmark[1]{References}{ref}
\LastPageEnding

\end{document}